\def\Td{\operatorname{Td}}
\def\Mat{\operatorname{Mat}}
\def\tr{\operatorname{tr}}
\def\End{\operatorname{End}}
\def\Hom{\operatorname{Hom}}
\def\Ga{{\Gamma}}
\def\lra{{\longrightarrow}}
\def\pa{{\partial}}
\def\si{{\sigma}}
\def\om{{\omega}}
\def\Om{{\Omega}}
\def\la{{\lambda}}
\def\ga{{\gamma}}
\def\ch{\operatorname{ch}}
\def\ind{\operatorname{ind}}
\newcommand{\BL}{\biggl}
\newcommand{\BR}{\biggr}
\let\rom\textup
\theoremstyle{plain}
\newtheorem{theorem}{Theorem}
\newtheorem{lemma}[theorem]{Lemma}
\newtheorem{proposition}[theorem]{Proposition}
\theoremstyle{definition}
\newtheorem{definition}[theorem]{Definition}
\newtheorem{example}[theorem]{Example}
\theoremstyle{remark}
\newtheorem{remark}[theorem]{Remark}
\title{Noncommutative elliptic theory.  \\
        Examples}
\author{A.~Yu.~Savin, B.~Yu.~Sternin}
\date{}
\begin{document}
\maketitle


\begin{abstract}
We study differential operators, whose coefficients define noncommutative
algebras. As algebra of coefficients, we consider crossed products,
corresponding to action of a discrete group on a smooth manifold. We give index
formulas for Euler, signature and Dirac operators twisted by projections over
the crossed product. Index of Connes operators on the noncommutative torus is
computed.
\end{abstract}

\section*{Introduction}

Noncommutative elliptic theory is the theory of differential operators, whose
coefficients form algebras, which are in general noncommutative. Operators of
this type first appeared in the work of A.~Connes
 \cite{Con4,Con1}, where on the real line the following operators were considered
\begin{equation*}\label{eq-operators-connes22}
    D=\sum_{\alpha+\beta\le m} a_{\alpha\beta}x^\alpha\left(-i\frac
    d{dx}\right)^\beta.
\end{equation*}
Here the coefficients $a_{\alpha\beta}$ belong to the algebra generated by
operators   $U,V$
$$
(Uf)(x)=f(x+1),\qquad (Vf)(x)=e^{-2\pi i x/\theta}f(x),
$$
of unit shift and product by exponential  ($\theta\in (0,1]$ is some fixed
parameter). It is clear that the operators $U$ and $V$ do not commute. For such
operators, the ellipticity condition and index formula were obtained.

Further examples of noncommutative differential operators were constructed on
manifolds with torus action (see~\cite{CoLa2,LaSu1,CoDu1}) in connection with
studying isospectral deformations and other questions of noncommutative
geometry.

Specialists in the field of differential equations also studied noncommutative
differential equations   (e.g., see survey~\cite{AnLe3} and the references
there). Namely, if a  discrete group acts on a manifold, then one can consider
operators of multiplication by functions and also operators of the group
representation (which are called shift operators, or operators of change of
variables) as coefficients of differential operators. This class of operators
is known in the literature under various names: functional-differential
operators, operators with shifts, nonlocal operators, noncommutative operators.
We use the term ``noncommutative'', because from our point of view it is most
important in  elliptic theory  that the coefficients of these operators form
noncommutative algebras. This class of operators includes operators of
A.~Connes as well as operators on toric manifolds as special cases.

Although finiteness theorem (Fredholm property) for noncommutative differential
equations was established relatively long ago~\cite{Ant1}, the index problem,
i.e., the problem of expressing the index of a noncommutative elliptic operator
in terms of its symbol, was open for a long time. In 2008 this problem was
solved in~\cite{NaSaSt17,NaSaSt18}. The aim of the present paper is to apply
this index formula to operators, which appear in geometry, and compute indices
of some specific operators.

Let us briefly describe the contents of the paper.

In the first part of the paper (Sections 1-3) we review elliptic theory for
noncommutative differential operators: definition of operators, formula for the
symbol, finiteness theorem, construction of Chern character of elliptic symbol,
index theorem. The proofs of all these results were published
in~\cite{NaSaSt17}, except Theorem~\ref{th-e}, which is new. Then in Sections 4
and 5 we obtain index formulas for classical geometric operators: Euler,
signature, Dirac operators. In Section 6, we compute the index of operators on
the circle, while Section~7 deals with the index of operators of A.~Connes on
the noncommutative torus.

The  results of this paper were presented at International conferences
``$C^*$-algebras and Elliptic Theory III'' in Banach center (Poland), January
26 - 31, 2009 and ``$K$-theory, $C^*$-algebras and topology of manifolds'' in
Chern Institute of Mathematics (China), June 1-5, 2009. We are also grateful to
Professors A.S.~Mishchenko and C.~Ogle for useful remarks.

Supported by RFBR (08-01-00867), Council of the President of Russian Federation
(NSh-1562.2008.1), Deutsche Forschungsgemeinschaft (436 RUS
113/849/0-1\circledR ``$K$-theory and noncommutative geometry of stratified
manifolds''). The work of the first author was also supported by E.~Balzan
prize in mathematics, received by P.~Deligne in 2004.

\section{Finiteness theorem}

\paragraph{Noncommutative differential operators.}

Let a countable finitely generated group $\Ga$ act on a smooth closed
Riemannian manifold $M$.


Throughout the paper we consider only isometric actions of $\Ga$ and suppose
that the action extends to an action of a compact Lie group $G$. We shall also
assume that the discrete groups under consideration are of polynomial growth.

On $M$ we choose a $G$-invariant Riemannian metric and volume form.

\begin{definition}
\emph{Noncommutative differential operator of order} $\le m$ is an operator of
the form
\begin{equation}\label{eq-nc-operator}
    D=\sum_{g\in \Ga} T(g)D(g):C^\infty(M)\lra C^\infty(M),
\end{equation}
which acts in the space of smooth functions, where
\begin{itemize}
    \item $D(g)$ is a differential operator of order $\le m$,
    \item $T(g)$ is \emph{shift operator} (change of variables)
    \begin{equation}\label{eq-shift-oper}
      (T(g)u)(x)=u(g^{-1}(x)),
    \end{equation}
    corresponding to a diffeomorphism $g:M\to M$.
\end{itemize}
\end{definition}

There is a question of convergence of the series in~\eqref{eq-nc-operator}, if
$\Ga$ is infinite. Therefore, we shall first assume for simplicity that the
operators $D(g)$ are equal to zero, except for finitely many elements $g$. Less
restrictive condition will be given below.

\paragraph{Symbol of noncommutative operator.}

Obviously,  operator \eqref{eq-nc-operator} is determined modulo operators of
order  $\le m-1$ by the set of symbols $\sigma(D(g))$ of operators
$D(g)$.\footnote{Recall that the symbol of a differential operator
$$
P=\sum_{|\alpha|\le m} a_\alpha(x)\left(-i\frac\partial{\partial x}\right)^{\alpha}
$$
of order $\le m$ is a function
$$
 \sigma(P)=\sum_{|\alpha|=m} a_\alpha(x)\xi^{\alpha},\quad (x,\xi)\in T^*M,
$$
defined on the cotangent bundle minus the zero section $T^*M\setminus 0$ and
homogeneous of order $m$.} This set can be naturally considered as a symbol of
operator  $D$ and is denoted by
\begin{equation}\label{eq-symbol}
    \sigma(D)=\{\sigma(D(g))\}_{g\in\Ga}.
\end{equation}
Let us establish the composition formula, i.e., the rule of multiplication of
symbols, corresponding to composition of operators.

The composition of operators $D=\sum_g T(g)D(g)$ and $Q=\sum_h T(h)Q(h)$ is
equal to
\begin{multline}\label{compoz}
 DQ=\sum_{g,h} T(g)D(g)T(h)Q(h)=\sum_{g,h}
 T(gh)\bigl(T(h)^{-1}D(g)T(h)\bigr)Q(h)=\\
 = \sum_k T(k)\bigl(\sum_{gh=k} T(h)^{-1}D(g)T(h)Q(h)\bigr).
\end{multline}
Here the operator $T(h)^{-1}D(g)T(h)$ is a differential operator with symbol
\begin{equation}\label{beta}
\sigma(T(h)^{-1}D(g)T(h))=T({\partial h})\Bigl(\sigma(D(g))\Bigr),
\end{equation}
where $\partial h:T^*M\lra T^*M$ is the codifferential\footnote{The action of
$G$ on $M$ extends to action on the cotangent bundle. Namely, an element $h\in
G$ acts as the codifferential
$$
\partial h=((dh)^t)^{-1},
$$
where $dh:TM\to TM$ is the differential of $h$, and $(dh)^t$ is the dual
mapping of the cotangent bundle.} of diffeomorphism $h$. Note that~\eqref{beta}
is just the usual formula for transformation of symbol under a change of
variables.  So, if we substitute~\eqref{beta} in \eqref{compoz}, we see, that
the product $DQ$ is an operator of the form~\eqref{eq-nc-operator}, and the
symbols of its components are equal to
\begin{equation}\label{eq-composition}
    \sigma(DQ)(k)=\sum_{gh=k} \Bigl(T({\pa h})\sigma(D(g))\Bigr)\sigma(Q(h)).
\end{equation}
Algebras, whose elements are, similar to  \eqref{eq-symbol}, functions on
$\Gamma$ and the product is defined by convolution of the
form~\eqref{eq-composition} are called \emph{crossed products} by $\Gamma$.
Before we give the definition of the crossed product suitable for our purposes,
we give two remarks.

First, we shall consider restrictions of symbols to the cosphere bundle
$S^*M\subset T^*M$, which consists of covectors of unit length. The action of
$G$ restricts to the cosphere bundle, since the action is isometric.

Second, we shall consider classes of functions on $\Gamma$, which are larger
than compactly supported functions, because in the class of functions with
compact support one can not find inverse elements\footnote{Indeed, consider
operator $Id+T(g)/2$, where $g\ne e$. It is invertible. However, the inverse is
given by the Neumann series
$$
(Id+T(g)/2)^{-1}=Id -T(g)/2+T(g^2)/4-\ldots
$$
and can not be written in general as a finite sum, unless $g$ is of finite
order.}.

\begin{definition}\label{def-cross}
\emph{Smooth crossed product} of the Fr\'echet algebra $C^\infty(S^*M)$  and
group $\Ga$ is the algebra of $C^\infty(S^*M)$-valued functions on $\Ga$, for
which the following seminorms
$$
\|f\|_{n,l}=\sup_{g\in\Ga} \Bigl(\|f(g)\|_n(1+|g|)^l\Bigr)
$$
are finite for all $n$ and $l$, where $\{\|\cdot\|_n\}$ runs over some defining
system of seminorms in $C^\infty(S^*M)$, with the product given by
$$
 (f_1f_2)(k)=\sum_{gh=k} (T({\partial h})f_1(g))f_2(h).
$$
\end{definition}
The smooth crossed product is denoted by $C^\infty(S^*M)\rtimes\Ga$.

\begin{remark}
Interested reader can find detailed exposition of smooth crossed products
in~\cite{Schwe1}. In particular, it is proved in the cited paper that the
smooth crossed product is an algebra. Note also that in
Definition~\ref{def-cross} one can replace $C^\infty(S^*M)$ by any Fr\'echet
algebra, on which group $\Ga$ acts.
\end{remark}

\begin{definition}
\emph{Symbol} of noncommutative operator~\eqref{eq-nc-operator} is the
collection~\eqref{eq-symbol}, which is considered as an element of the smooth
crossed product:
$$
\sigma(D)\in C^\infty(S^*M)\rtimes\Ga.
$$
A noncommutative differential operator is called \emph{elliptic}, if its symbol
is invertible in the algebra $C^\infty(S^*M)\rtimes\Ga$.
\end{definition}
The equality~\eqref{eq-composition} means that for noncommutative operators
$D,Q$ one has composition formula
\begin{equation}\label{eq-compos2}
\sigma(DQ)=\sigma(D)\sigma(Q).
\end{equation}

\paragraph{Finiteness theorem.}

\begin{theorem}\label{th-finite}
An elliptic noncommutative operator $D$ is Fredholm as an operator acting in
Sobolev spaces
$$
D:H^s(M)\lra H^{s-m}(M)
$$
for all $s$. Moreover, the kernel and cokernel consist of smooth functions.
\end{theorem}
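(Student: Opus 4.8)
The plan is to construct an almost-inverse (parametrix) for $D$ within the calculus of noncommutative differential/pseudodifferential operators, so that $D$ becomes invertible modulo compact operators on the relevant Sobolev spaces. First I would enlarge the class \eqref{eq-nc-operator} to noncommutative \emph{pseudodifferential} operators of complex order, i.e.\ operators $R=\sum_{g\in\Ga}T(g)R(g)$ where each $R(g)$ is a classical $\Psi$DO and the symbols $\{\sigma(R(g))\}$ form an element of a suitable crossed product of the algebra of (homogeneous) symbols on $S^*M$ with $\Ga$; the composition formula \eqref{eq-composition} together with \eqref{beta} extends verbatim, since conjugation by a diffeomorphism sends a $\Psi$DO to a $\Psi$DO with symbol transformed by the codifferential, and the decay/seminorm conditions of Definition~\ref{def-cross} are preserved because $\Ga$ has polynomial growth and acts isometrically (so $|\pa h|$ and all its derivatives are controlled uniformly in $g$ via the word length). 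The key algebraic input is that $\sigma(D)$ is invertible in $C^\infty(S^*M)\rtimes\Ga$: I would lift this inverse $\sigma(D)^{-1}$ to an order-$(-m)$ noncommutative $\Psi$DO $R_0$, so that by \eqref{eq-compos2} one has $\sigma(DR_0)=1$ and $\sigma(R_0D)=1$, i.e.\ $DR_0-\mathrm{Id}$ and $R_0D-\mathrm{Id}$ are noncommutative operators of order $\le -1$.

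Next I would upgrade this first-order parametrix to a full parametrix by a Neumann-series argument inside the calculus: writing $DR_0=\mathrm{Id}-S$ with $S$ of order $\le -1$, I want $R=R_0(\mathrm{Id}+S+S^2+\cdots)$. Here is where the hypotheses on $\Ga$ matter crucially — the formal Neumann series $\sum_j S^j$ must be summed (asymptotically) to an operator still lying in the noncommutative calculus, which requires the smooth crossed product to be closed under holomorphic functional calculus, or at least that one can form an asymptotic sum of the $S^j$ whose symbols decay in $\Ga$. Polynomial growth of $\Ga$ guarantees that the rapidly-decreasing (Schwartz-type) functions on $\Ga$ entering Definition~\ref{def-cross} form a spectrally invariant subalgebra of the reduced group $C^*$-algebra (this is Schweitzer's theorem, cited in the excerpt), so the Neumann series converges in the $C^*$-closure and the limit has a smooth symbol. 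This produces $R$ with $DR-\mathrm{Id}$ and $RD-\mathrm{Id}$ of order $-\infty$, hence, by boundedness of noncommutative $\Psi$DOs on Sobolev scales and the fact that order-$(-\infty)$ operators (built from smoothing operators $D(g)$ with rapidly decreasing $\Ga$-support) are compact on $H^s(M)$, the operator $D:H^s(M)\to H^{s-m}(M)$ is Fredholm for every $s$.

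For the regularity statement, elliptic regularity follows from the parametrix in the usual way: if $Du=f$ with $f\in C^\infty(M)$ and $u\in H^s(M)$, then $u=RDu-(RD-\mathrm{Id})u=Rf-(RD-\mathrm{Id})u$; the first term is smooth because $R$ maps $C^\infty$ to $C^\infty$ (each $R(g)$ does, and the $\Ga$-sum converges in every Sobolev norm by the rapid decay), and the second term is smoothing, so $u\in C^\infty(M)$; this applies to $\ker D$. The cokernel is handled symmetrically by passing to the formal adjoint $D^*$, which is again a noncommutative (pseudo)differential operator with invertible symbol (the inverse being the adjoint in $C^\infty(S^*M)\rtimes\Ga$ of $\sigma(D)^{-1}$), so $\operatorname{coker}D$ is identified with $\ker D^*\subset C^\infty(M)$.

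I expect the main obstacle to be the second step: verifying that the parametrix can be constructed \emph{within} the smooth crossed product rather than merely in its $C^*$-completion. Convergence of the Neumann series in the $C^*$-norm is easy, but showing the sum retains the $\Ga$-decay seminorms of Definition~\ref{def-cross} — uniformly in all the $C^\infty(S^*M)$-seminorms and all polynomial weights $(1+|g|)^l$ — is precisely what requires spectral invariance / the Schweitzer machinery, and this is the one place where ``polynomial growth of $\Ga$'' is indispensable. Everything else (the composition formula, Sobolev boundedness, compactness of smoothing operators, elliptic regularity) is a routine adaptation of the classical pseudodifferential parametrix method with the codifferential twist \eqref{beta} bookkept along the group.
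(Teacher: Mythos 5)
Your first step coincides with the paper's entire proof: since ellipticity is \emph{defined} as invertibility of $\sigma(D)$ in $C^\infty(S^*M)\rtimes\Ga$, one quantizes $\sigma(D)^{-1}$ to a noncommutative pseudodifferential operator $Q$ of order $-m$, and by the composition formula \eqref{eq-compos2} the remainders $\mathrm{Id}-QD$ and $\mathrm{Id}-DQ$ have zero symbol, hence are operators of order $\le -1$ and compact on the Sobolev scale; this already yields the Fredholm property, and smoothness of the kernel and cokernel follows by bootstrapping with this same $Q$ (if $Du=0$ and $u\in H^s$, then $u=(\mathrm{Id}-QD)u\in H^{s+1}$, and iterating gives $u\in\bigcap_s H^s=C^\infty(M)$; the cokernel is handled via $D^*$ exactly as you propose). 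Your second step --- summing the Neumann series $\sum_j S^j$ to obtain a parametrix modulo order $-\infty$ --- is therefore unnecessary, and it is also the weakest link in your write-up: a series of operators of strictly decreasing order does not converge in any fixed operator norm and must be summed \emph{asymptotically} by a Borel-type construction, which is not what spectral invariance provides; Schweitzer's results concern inverting elements of the smooth crossed product inside its $C^*$-completion (they are what make the ellipticity condition reasonable, e.g.\ ensuring that $\sigma(D)^{-1}$ lies in the smooth algebra once $\sigma(D)$ is invertible in the $C^*$-crossed product), not summing asymptotic series of smoothing corrections. Since the order $-1$ parametrix suffices, the upgrade should simply be dropped in favour of the standard Fredholm and elliptic-regularity arguments, which is precisely what the paper does; the rest of your proposal (extension of the calculus, the role of polynomial growth and isometry in controlling the seminorms of Definition~\ref{def-cross}, compactness of negative-order operators) matches the paper's first step.
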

\begin{proof}[Sketch of the proof.]

1. Using classical theory of pseudodifferential operators   (e.g., see
\cite{KoNi1}), for any $a\in C^\infty(S^*M)\rtimes\Ga$ we can define operator
$A$ of the form \eqref{eq-nc-operator}, which has symbol $a$. In addition,
composition formula~\eqref{eq-compos2} remains valid.

2. Let $Q$ denote an operator with symbol $\sigma(D)^{-1}\in
C^\infty(S^*M)\rtimes\Ga$. Then the operators
$$
 Id-QD\qquad \text{and}\qquad Id-DQ
$$
have zero symbols by composition formula~\eqref{eq-compos2} and, therefore, are
compact operators of order $-1$. Then the desired properties follow from the
standard considerations.
\end{proof}

\paragraph{Operators in subspaces.}

The theory of scalar operators, which we considered so far, has the following
natural generalization.

Let $D$ be a $N\times N$ matrix operator with matrix components, which are
noncommutative operators, and $P_1$ and $P_2$ be projections in the algebra
$\Mat_N(C^\infty(M)\rtimes\Ga)$  of  $N\times N$ matrices over the crossed
product $C^\infty(M)\rtimes\Ga$. Suppose that the following equality holds
$$
D=P_2DP_1.
$$
Then we can consider the restriction
\begin{equation}\label{eq-operator-in-sections}
    D:P_1(C^\infty(M,\mathbb{C}^N))\lra P_2(C^\infty(M,\mathbb{C}^N))
\end{equation}
of $D$ to subspaces in the space of vector-functions on  $M$, which are defined
as ranges of projections $P_{1}$ and $P_2$.

The operator~\eqref{eq-operator-in-sections} is denoted by
$\mathbf{D}=(D,P_1,P_2)$ and called \emph{noncommutative operator} acting in
spaces defined by projections $P_1,P_2$.
Operator~\eqref{eq-operator-in-sections} is called \emph{elliptic}, if there
exists a matrix symbol  $r\in\Mat_N(C^\infty(S^*M)\rtimes \Ga)$ such that
$$
r\sigma(D)=\sigma(P_1),\qquad \sigma(D)r=\sigma(P_2).
$$
In this case a result similar to Theorem~\ref{th-finite} holds. Namely, if
operator~\eqref{eq-operator-in-sections} is elliptic, then it has Fredholm
property and its kernel and cokernel consist of smooth functions.

\section{Topological invariants}

To describe index formula, we need to introduce topological invariants, which
describe contributions of the symbol of elliptic operator and the manifold.
There are two such invariants. The first is the Chern character, which is
defined on a certain $K$-group. The second invariant is a suitable modification
of the Todd class. These topological invariants are constructed in the present
section.

\subsection{$K$-theory class of elliptic symbol}

For a noncommutative elliptic operator $\mathbf{D}=(D,P_1,P_2)$, we define the
element
$$
[\sigma(\mathbf{D})]\in K_0(C^\infty_0(T^*M)\rtimes\Ga)
$$
of the even $K$-group of the crossed product of the algebra $C^\infty_0(T^*M)$
of compactly supported functions on the cotangent bundle by $\Ga$. To this end,
consider the projections
\begin{equation}\label{projector1} q_2=\frac12
\begin{pmatrix}
  (1-\sin\psi)\si(P_1)  &  \sigma(D)^{-1}\cos\psi \vspace{1mm}\\
  \sigma(D)\cos\psi  & (1+\sin\psi)\si(P_2)
\end{pmatrix},\quad
q_1= \begin{pmatrix}
  0 &  0 \\
  0 &  \si(P_2)
\end{pmatrix}
\end{equation}
over the algebra $C^\infty_0(T^*M)\rtimes\Ga$ with adjoint unit. Here $\psi\in
C^\infty(T^*M)$ denotes a $\Gamma$-invariant function on the cotangent bundle
$T^*M$ with canonical coordinates $x,\xi$, which depends only on $|\xi|$, for
small $|\xi|$ is equal to $-\pi/2$,  then increases as $|\xi|$ increases, while
for large $|\xi|$ this function is equal to $+\pi/2$. In addition, the symbols
of operators are considered as homogeneous functions of degree zero on the
cotangent bundle $T^*M\setminus 0$ minus the zero section. Finally, we assume
that the projection $P_2$ is equal to ${\rm
diag}(1,1,...,1,0,0,...,0)$.\footnote{This can be achieved, if we consider
direct sum of the initial operator and the invertible operator
$(Id-P_2,Id-P_2,Id-P_2)$.}

Let us now set
$$
 [\sigma(\mathbf{D})]=[q_2]-[q_1].
$$

\subsection{Chern character for crossed products}

Let $X$ be a compact smooth manifold, on which $\Ga$ acts and the action
satisfies conditions formulated at the beginning of Section~1.

To define the Chern character on the $K$-group of the crossed product
$C^\infty(X)\rtimes\Ga$, we embed the crossed product algebra in a certain
differential graded algebra (algebra of noncommutative differential forms), and
then construct a differential trace on this algebra, which takes noncommutative
differential forms to de Rham forms on the manifold.

\paragraph{Noncommutative differential forms.}

Denote by  $\Omega(X)$ the algebra of differential forms on $X$ with
differential $d$. The action of $\Gamma$ on smooth functions extends to the
action on forms.

Consider the smooth crossed product $\Omega(X)\rtimes \Ga$. This algebra is
graded and has the differential
$$
(d\om)(g)=d(\om(g)),\qquad \om\in \Omega(X)\rtimes \Ga.
$$
Equality $d^2=0$ is obvious, while the Leibniz rule
$$
d(\om_1\om_2)=(d\om_1)\om_2+(-1)^{{\rm deg}\om_1}\om_1 d\om_2,\quad
\om_1,\om_2\in \Omega(X)\rtimes \Ga,
$$
follows from the invariance of the exterior differential with respect to
diffeomorphisms.

We shall refer to $\Omega(X)\rtimes \Ga$ as the \emph{algebra of noncommutative
differential forms} on the $\Ga$-manifold $X$.

\paragraph{Differential trace.}

Given $g\in\Ga$, denote by $X^g$ the  fixed point set of the action of $g$ on
$X$. Since by assumption  $g$ acts isometrically, the set $X^g\subset
 X$ is a smooth submanifold (e.g., see~\cite{CoFl1}).

Let us define the \emph{differential trace}
\begin{equation}\label{eq-tau-g}
    \tau_g: \Om (X)\rtimes\Ga\lra \Om (X^g).
\end{equation}
This means that $\tau_g$ has to be linear mapping, which  satisfies the
following properties
\begin{equation}\label{eq-sled}
 \tau_g(\om_2\om_1)=(-1)^{\deg\om_1\deg\om_2}\tau_g(\om_1\om_2),\qquad  \text{for all }
 \om_1,\om_2\in\Om (X)\rtimes\Ga,
\end{equation}
\begin{equation}\label{eq-sled2}
 d\left(\tau_g(\om)\right)=\tau_g(d\om),\qquad  \text{for any form } \om\in\Om (X)\rtimes\Ga.
\end{equation}

To define the trace \eqref{eq-tau-g}, we introduce some notation. Let
$C_g\subset G$ be the centralizer\footnote{Note that the centralizer of an
element $g$ is the set of elements of the group, which commute with $g$.} of
$g$. The centralizer is a closed Lie subgroup in $G$. Denote elements of
centralizer by $h$ and the induced Haar measure on the centralizer by $dh$.

Denote by $\langle g\rangle\subset\Ga$  the conjugacy class of $g$, which is
the set of elements, which can be written in the form $zgz^{-1}$ for some $z\in
\Ga$. We also fix for any element $g'\in \langle g\rangle$ some element
$z=z(g,g')$, which conjugates  $g$ and $g'=zgz^{-1}$. Any such element defines
a diffeomorphism $z:X^g\to X^{g'}$.

Let us now define  the trace~\eqref{eq-tau-g} as
\begin{equation}\label{eq-sled-nash}
    \tau_g(\om)=
      \sum_{g'\in \langle g\rangle}\;\;\;
        \int_{C_g}
           \Bigl.h^*\bigl(
              {z}^*\om(g')
             \bigr)\Bigr|_{X^g}
           dh.
\end{equation}
One can show that this expression does not depend on the choice of elements $z$
and satisfies conditions \eqref{eq-sled},~\eqref{eq-sled2}.

\begin{example}
Suppose that $X$ is a one-point space, and $\Ga$ is finite. Then the crossed
product $\Omega(X)\rtimes\Ga$ coincides with the group algebra $\mathbb{C}\Ga$,
while the trace \eqref{eq-sled-nash} is equal to
$$
\tau_g(f)=\sum_{g'\in\langle g\rangle} f(g').
$$
\end{example}
\begin{example}
Suppose now that $X$ is arbitrary, and $\Ga$ is finite. Consider the unit
element $g=e\in\Ga$. In this case we can set $G=\Gamma$. Then the trace
$\tau_e$ is equal to
$$
\tau_e(\omega)=\frac 1{|\Gamma|}\sum_{h\in\Ga}  h^*(\omega(e)),\qquad \omega\in \Omega(X)\rtimes\Ga,
$$
i.e., its value is equal to the averaging over $\Ga$ of the value of $\omega$
on the unit element of the group.
\end{example}

\paragraph{Chern character.}

We are now in a position to define the Chern character. Let $P\in
\Mat_N(C^\infty(X)\rtimes\Ga)$ be a matrix projection.

Consider the restriction of the differential $d$ to the range of $P$:
$$
\nabla_P=P\circ d\circ P:\Omega(X,\mathbb{C}^N)\rtimes\Ga\lra
\Omega(X,\mathbb{C}^N)\rtimes\Ga.
$$
Here $P$ acts in the space $\Omega(X,\mathbb{C}^N)\rtimes\Ga$ by left
multiplication. An easy computation shows that the curvature form $ \nabla_P^2$
is actually operator of multiplication by a noncommutative matrix-valued
2-form.
\begin{definition}
The Chern character, corresponding to element $g\in\Ga$, of projection $P$ is
the following de Rham form
\begin{equation}\label{eq-chern}
    \ch_g(P)=\tr\tau_g\left(P\exp\left(-\frac{\nabla_P^2}{2\pi i}
    \right)\right)\in \Omega(X^g),
\end{equation}
defined on the fixed point set $X^g$, where $\tr$ is the matrix trace.
\end{definition}
A straightforward computation shows that the form~\eqref{eq-chern} is closed,
and its cohomology class does not change under stable homotopies of projection
$P$. Thus, we obtain the  well-defined homomorphism
\begin{equation}\label{eq-chern-parad}
    \begin{array}{ccc}
      \ch_g:K_0(C^\infty(X)\rtimes\Ga) & \lra & H^{ev}(X^g), \vspace{3mm}\\
      P & \longmapsto & \ch_g (P). \\
    \end{array}
\end{equation}

\begin{remark}
If the group is finite, then the Chern character~\eqref{eq-chern} is equal to
the one constructed in~\cite{Slo1},~\cite{BaCo2}.
\end{remark}

\paragraph{Chern character for projections in bundles.}

It is important in applications to have formulas for the Chern character of
projections defined in nontrivial vector bundles. Let us show how the formulas
given above have to be modified in this case.

Let $E$ be a finite-dimensional complex vector bundle over $X$. First, we
define the algebra of \emph{noncommutative endomorphisms} $C^\infty(X,\End
E)_\Ga$ as the algebra of functions on $\Ga$, such that the value of a function
at point $g\in\Ga$ is an element of the space $\Hom(E,g^*E)$. The product in
this algebra is defined by the formula
$$
(f_1f_2)(k)=\sum_{gh=k} h^*f_1(g)(h^*)^{-1}f_2(h),
$$
(cf. \eqref{eq-composition}) where $h^*:E\to h^*E$ is isomorphism. $\End
E$-valued noncommutative differential forms are defined similarly and denoted
by $\Omega(X,\End E)_\Ga$. The trace in this case is equal to
$$
\tau_g(\om)=
      \sum_{g'\in \langle g\rangle}\;\;\;
        \int_{C_g}
           \Bigl.h^*\tr\bigl(
              {z}^*\om(g')
             \bigr)\Bigr|_{X^g}
           dh,
$$
where $\tr$ is the trace of endomorphism of bundle $E|_{X^g}$.

Then the Chern character of any projection $P\in C^\infty(X,\End E)_\Ga$   is
computed by the same formula~\eqref{eq-chern}, where we use the following
noncommutative connection
$$
\nabla_P=P\circ \nabla_E\circ P:\Omega(X,\End E)_\Ga\lra \Omega(X,\End E)_\Ga.
$$
(Here $\nabla_E$ is  some connection    in $E$.)

\subsection{Todd class}

Given $g\in\Ga$,  denote by $M^g$ the fixed point manifold of this element. Let
$N^g$ be the normal bundle of $M^g$ in $M$.

The differential of $g$ induces orthogonal endomorphism of $N^g$ and the
exterior form bundle
$$
 \Omega(N^g\otimes\mathbb{C})=\Omega^{ev}(N^g\otimes\mathbb{C})\oplus
 \Omega^{odd}(N^g\otimes\mathbb{C}).
$$
Consider the Chern character\footnote{Recall (see~\cite{AtSi3}) the definition
of the class $\ch E(g)$ of a $G$-vector bundle $E$ on a trivial $G$-space $X$.
If we decompose the bundle $E$ as the direct sum $E=\oplus_\la E_\la$ of
eigensubbundles with regard to the action of $g$, then we have
$$
\ch E(g)=\sum_\la \la \ch E_\la\in H^{ev}(X).
$$
}
\begin{equation}\label{eq-forms}
\ch \Omega^{ev}(N^g\otimes\mathbb{C})(g)-\ch
\Omega^{odd}(N^g\otimes\mathbb{C})(g)\in H^{ev}(M^g).
\end{equation}
The zero degree component of this expression is nonzero~\cite{AtSe2}.
Therefore, the class~\eqref{eq-forms} is invertible and the following class
\begin{equation}\label{eq-todd-class}
    \Td_g(T^*M\otimes \mathbb{C})=\frac{\Td(T^*M^g\otimes\mathbb{C})}{\ch \Omega^{ev}(N^g\otimes\mathbb{C})(g)-\ch
\Omega^{odd}(N^g\otimes\mathbb{C})(g)},
\end{equation}
is well defined, where $\Td$ in the right hand side of the equality is the
standard Todd class of a complex vector bundle, and the division is well
defined, because both  numerator and denomenator are even degree cohomology
classes.

\section{Index theorem}

\begin{theorem}\label{atsi}
Let $D$ be a noncommutative elliptic operator on a closed manifold $M$. Then
\begin{equation}\label{Cohom1}
\ind D=\sum_{\langle g \rangle\subset\Ga}\left\langle
\ch_g[\sigma(D)]\Td_g(T^*M\otimes\mathbb{C}), [T^*M^{g}]\right\rangle,
\end{equation}
where $\langle g\rangle$ runs over the set of conjugacy classes of   $\Ga$\rom;
$[T^*M^{g}]\in H_{ev}(T^*M^{g})$ is the fundamental class of $T^*M^{g}$, the
Todd class is lifted from  $M_{g}$ to the corresponding $T^*M^{g}$ by the
natural projection and the angular brackets denote the pairing between homology
and cohomology. The series in~\eqref{Cohom1} is absolutely convergent.
\end{theorem}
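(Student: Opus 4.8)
The plan is to deduce \eqref{Cohom1} from the fact that the analytic index is a homomorphism on $K$-theory, combined with a localization of the relevant cyclic cocycle over conjugacy classes, each local term being evaluated by the equivariant Atiyah--Singer fixed point theorem.

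\emph{Step 1: reduction to $K$-theory.} By the construction in Section~2, the index of a noncommutative elliptic operator $\mathbf D=(D,P_1,P_2)$ depends only on the class $[\sigma(\mathbf D)]\in K_0(C^\infty_0(T^*M)\rtimes\Ga)$. Theorem~\ref{th-finite} and its analogue for operators in subspaces show that $\ind$ is homotopy invariant and additive under direct sums, hence factors through a group homomorphism $\ind\colon K_0(C^\infty_0(T^*M)\rtimes\Ga)\to\mathbb Z$. The right-hand side of \eqref{Cohom1} is also additive in $[\sigma(D)]$: each $\ch_g$ is a homomorphism on $K_0$ by \eqref{eq-chern-parad}, while $\Td_g(T^*M\otimes\mathbb C)$ and the fundamental class $[T^*M^g]$ are fixed. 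So it suffices to show that the two homomorphisms agree, which I would do by presenting both as the pairing of $[\sigma(D)]$ with one explicit cyclic cohomology class.

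\emph{Step 2: the index as a cyclic pairing.} Following the noncommutative version of Connes' index theorem, I would show $\ind D=\langle\ch[\sigma(D)],[\tau]\rangle$, where $[\tau]$ is the class on $C^\infty_0(T^*M)\rtimes\Ga$ obtained by transporting the de Rham fundamental current of $T^*M$ through the differential traces $\tau_g$ of \eqref{eq-sled-nash} and the embedding of the crossed product into its algebra of noncommutative differential forms. Concretely: construct a parametrix as in the proof of Theorem~\ref{th-finite}, express the index as a (super)trace of the difference of resolvent- or heat-type operators, and expand the canonical trace on the crossed product using the classical decomposition of the (co)homology of crossed products as a sum over conjugacy classes $\langle g\rangle\subset\Ga$. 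For a fixed $g$, the term built from $T(g)$ localizes to the fixed submanifold $M^g$ (which is smooth, since $g$ acts isometrically): off $M^g$ the relevant operator has a smooth kernel, which contributes nothing to the local index density. The rapid-decay seminorms of Definition~\ref{def-cross} together with polynomial growth of $\Ga$ bound the terms uniformly, giving the absolute convergence of the series.

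\emph{Step 3: evaluating the $g$-th term.} It remains to identify the contribution of $\langle g\rangle$ with $\langle\ch_g[\sigma(D)]\,\Td_g(T^*M\otimes\mathbb C),[T^*M^g]\rangle$. Here I would invoke the equivariant Atiyah--Singer fixed point theorem for the $G$-action on $T^*M$ (the centralizer $C_g$ and the Haar average over it being already built into $\tau_g$): restricted to $T^*M^g$, the local density is $\ch$ of the symbol on the fixed set times a correction class for the transverse directions, namely $\Td(T^*M^g\otimes\mathbb C)$ divided by $\ch\Omega^{ev}(N^g\otimes\mathbb C)(g)-\ch\Omega^{odd}(N^g\otimes\mathbb C)(g)$, i.e.\ exactly $\Td_g(T^*M\otimes\mathbb C)$ of \eqref{eq-todd-class}. \textbf{The main obstacle} is Step~2: one must carry the conjugacy-class decomposition of the trace through the pseudodifferential/heat-kernel computation, verify that each conjugacy class yields a genuine finite-dimensional fixed-point integral, and control the error terms uniformly in $g$ well enough both to justify term-by-term evaluation and to obtain the stated absolute convergence. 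Once this reduction is made rigorous, the equivariant index theorem delivers the closed form for each term.
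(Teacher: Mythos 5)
The paper itself does not prove Theorem~\ref{atsi}: the introduction states that the proofs of all results of Sections 1--3 except Theorem~\ref{th-e} were published in~\cite{NaSaSt17}, so there is no in-paper argument to compare yours against. What the paper does reveal about the actual proof is the commutative triangle~\eqref{eq-triangle1}: in~\cite{NaSaSt17} the index homomorphism is factored through the direct image $p_!\colon K_0(C_0^\infty(T^*M)\rtimes\Ga)\to K_0(C^\infty(\Ga))$ followed by the trace $\tau=\sum\ch_g$, i.e.\ the route is a $K$-theoretic topological-index construction in the Atiyah--Singer style rather than the cyclic-cocycle/heat-kernel localization you propose. Your Step~1 (factoring $\ind$ through $K_0$ and reducing to the equality of two homomorphisms) is correct and is indeed the common starting point of both routes.

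The genuine gap is that your Steps~2 and~3, which you yourself flag as ``the main obstacle,'' contain essentially the whole theorem and are only asserted. Two points in particular would fail as written. First, the localization: for $g\ne e$ the $\langle g\rangle$-contribution to the index involves traces of the form $\str\bigl(T(g)K\bigr)$, and the claim that this localizes to $M^g$ with the correct transverse density does not follow from the remark that the kernel is smooth off $M^g$ (a smoothing operator composed with $T(g)$ has a perfectly well-defined trace everywhere; what localizes is the short-time heat asymptotics, and extracting the constant term is a Lefschetz-type analysis that must be redone here because the parametrix is itself an element of the crossed product, not a $G$-invariant operator). Second, Step~3 invokes the equivariant Atiyah--Singer fixed point theorem, but that theorem applies to $G$-invariant symbols in $K^0_{G,c}(T^*M)$ and computes the character of the $G$-index at $g$; here $[\sigma(D)]$ lives in $K_0(C_0^\infty(T^*M)\rtimes\Ga)$ and is in general \emph{not} of the product form $[P]\otimes[\sigma(D_0)]$ to which the multiplicativity diagram~\eqref{prodacc22} applies. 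Bridging this --- either by a direct local computation with the shift operators inserted, or by a $K$-theoretic reduction of arbitrary crossed-product symbols to twisted equivariant ones --- is precisely the content of~\cite{NaSaSt17}; until it is supplied, the identification of the $g$-th term with $\left\langle\ch_g[\sigma(D)]\Td_g(T^*M\otimes\mathbb{C}),[T^*M^g]\right\rangle$, and the absolute convergence of the series, remain unproved.
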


For matrix operators (systems of equations), the index formula is written in
terms of integrals over the corresponding cosphere bundles.  To formulate the
result, we introduce the corresponding odd Chern character.

\begin{definition}\label{cosphere}
The \emph{odd Chern character} of matrix elliptic symbol
$$
\sigma\in
\Mat_N(C^\infty(S^*M)\rtimes \Gamma)
$$
is the collection
$$
\ch^{odd}(\sigma)\in \bigoplus_{\langle g\rangle\subset\Ga} H^{odd}(S^*M^g)
$$
of cohomology classes, which are defined as
\begin{equation}\label{eq-chern-odd}
\ch^{odd}_g(\sigma)= \tr\tau_g\BL[
 \sum_{n\ge 0}\frac{n!}{(2\pi i)^{n+1}(2n+1)!}
(\sigma^{-1}d\sigma)^{2n+1} \BR].
\end{equation}
\end{definition}
The index formula~\eqref{Cohom1} in this case becomes
\begin{equation}\label{eq-index-1a}
\ind D=\sum_{\langle g \rangle\subset\Ga}\left\langle \ch^{odd}_g (\sigma(D))
\Td_g(T^*M\otimes\mathbb{C}), [S^*M_{g}]\right\rangle.
\end{equation}

\begin{theorem}[on index contribution of trivial element of the group]\label{th-e}
Suppose that either the action of $\Ga$ on $M$ is free or $\Ga$ is torsion
free. Then for any elliptic operator $D$ one has
\begin{equation}\label{eq-index-2}
    \ind D=\langle \ch_e[\sigma(D)]\Td(T^*M\otimes\mathbb{C}),[T^*M]\rangle.
\end{equation}
\end{theorem}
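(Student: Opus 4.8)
The plan is to deduce Theorem~\ref{th-e} from the general index formula \eqref{Cohom1} by showing that under either hypothesis every summand attached to a conjugacy class $\langle g\rangle\neq\langle e\rangle$ vanishes, while the summand attached to $\langle e\rangle$ is exactly the right-hand side of \eqref{eq-index-2}. The latter is a routine normalization: since $M^e=M$, the normal bundle $N^e$ is the zero bundle, so $\Omega^{ev}(N^e\otimes\mathbb{C})$ is the trivial line bundle and $\Omega^{odd}(N^e\otimes\mathbb{C})=0$; hence the denominator in \eqref{eq-todd-class} equals $1$ and $\Td_e(T^*M\otimes\mathbb{C})=\Td(T^*M\otimes\mathbb{C})$. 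Combined with $M^e=M$, this identifies the $\langle e\rangle$-summand of \eqref{Cohom1} with $\langle\ch_e[\sigma(D)]\,\Td(T^*M\otimes\mathbb{C}),[T^*M]\rangle$.

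For the free case the argument is immediate: if $\Ga$ acts freely, then $M^g=\emptyset$ for every $g\neq e$, so $T^*M^g=\emptyset$, the fundamental class $[T^*M^g]$ is $0$, and the corresponding summand of \eqref{Cohom1} vanishes. The series \eqref{Cohom1} collapses to its $\langle e\rangle$-term, which is \eqref{eq-index-2}.

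The torsion-free case is the substantive one. Here $g\neq e$ has infinite order, and since the action extends to the compact Lie group $G$, the closure $H=\overline{\langle g\rangle}\subset G$ is a compact abelian Lie group topologically generated by $g$; being monothetic and infinite, $H\cong\mathbb{T}^k\times\mathbb{Z}/m$ with $k\ge 1$, and $M^g=M^H$ is a proper $G$-invariant submanifold on which the positive-dimensional torus $\mathbb{T}^k$ acts trivially. I want to show that the summand $\langle\ch_g[\sigma(D)]\,\Td_g(T^*M\otimes\mathbb{C}),[T^*M^g]\rangle$ is then $0$. The route I would take is to trace through the construction of the symbol class $[\sigma(\mathbf{D})]\in K_0(C^\infty_0(T^*M)\rtimes\Ga)$: one checks directly from \eqref{eq-chern} and \eqref{eq-sled-nash} that $\ch_g$ annihilates any class represented by projections supported at $e\in\Ga$, because then $P\exp(-\nabla_P^2/2\pi i)$ is again supported at $e$ and, as $e\notin\langle g\rangle$ for $g\neq e$, the trace $\tau_g$ reads off only the vanishing $\langle g\rangle$-component; so it suffices to know that, for a torsion-free $\Ga$ of polynomial growth, $[\sigma(\mathbf{D})]$ lies in the image of $K_0(C^\infty_0(T^*M))$ --- or, equivalently, that the delocalized (non-$e$) part of the cyclic cohomology of $C^\infty_0(T^*M)\rtimes\Ga$ plays no role in the index pairing, a statement in the spirit of the Baum--Connes picture available for such groups. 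Establishing this last point --- that for a torsion-free group the index concentrates at the identity conjugacy class --- is exactly where the real work lies; the $e$-term computation and the free case are essentially formal once Theorem~\ref{atsi} is in hand.
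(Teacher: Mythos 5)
Your treatment of the free case and of the $\langle e\rangle$-summand (the identifications $M^e=M$, $N^e=0$, hence $\Td_e(T^*M\otimes\mathbb{C})=\Td(T^*M\otimes\mathbb{C})$) is correct and agrees with the paper. The torsion-free case, however, is where the theorem actually lives, and there your argument stops precisely at the point where a proof is required: you reduce the statement to the claim that ``the index concentrates at the identity conjugacy class'' and then note that establishing this is where the real work lies. That is a restatement of the theorem, not a proof. Moreover, the first of your two ``equivalent'' sufficient conditions --- that $[\sigma(\mathbf{D})]$ lies in the image of $K_0(C^\infty_0(T^*M))\to K_0(C^\infty_0(T^*M)\rtimes\Ga)$ --- is false in general: already for $\Ga=\mathbb{Z}$ acting by an irrational rotation the crossed product is the noncommutative torus, whose $K_0$ contains classes (e.g.\ the Rieffel projection used in Section~7 of this very paper, which produces a nonzero index) that are not pulled back from the base algebra; the delocalized part of the $K$-group is genuinely used by symbol classes. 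The two conditions you offer are also not equivalent to each other. Finally, the digression on the closure $\overline{\langle g\rangle}\subset G$ being $\mathbb{T}^k\times\mathbb{Z}/m$ is not used anywhere and does not advance the argument.

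The paper closes the gap by a different and concrete mechanism. First it invokes the commutative triangle from~\cite{NaSaSt17}: the analytic index factors as $\ind=\tau\circ p_!$, where $p_!:K_0(C^\infty_0(T^*M)\rtimes\Ga)\to K_0(C^\infty(\Ga))$ is the direct image to a point and $\tau=\sum_{\langle g\rangle}\ch_g$, the number $\ch_g(p_![\sigma(D)])$ being exactly the $\langle g\rangle$-contribution in~\eqref{Cohom1}. This reduces the whole question to traces on the group algebra: $\tau$ is induced by the tracial state $f\mapsto\sum_{g}f(g)$ on $C^*(\Ga)$, while $\ch_e$ is induced by the tracial state $\tau_e(f)=f(e)$. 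The decisive input is then Ji's theorem~\cite{Ji1} that all tracial states induce the same homomorphism on $K_0$ for such groups, whence $\tau(p_![\sigma(D)])=\tau_e(p_![\sigma(D)])$, i.e.\ the index equals the contribution of the trivial conjugacy class. To complete your proof you would need this trace-invariance result (or an equally substantive substitute, such as an actual Baum--Connes-type argument); without it the torsion-free half of the theorem remains unproved.
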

\begin{proof}
If the action is free, Eq.\eqref{eq-index-2} follows from~\eqref{Cohom1}, since
 $M^g$ is empty for any $g\ne e$.

Suppose now that  $\Ga$ is torsion free (i.e., has no elements of finite
order). Denote by  $C^\infty(\Ga)$ the smooth crossed product
$\mathbb{C}\rtimes\Ga$, which is just the algebra of rapidly decaying functions
on $\Ga$ with convolution product.

1.  It was shown in~\cite{NaSaSt17}, that the index mapping can be included as
a side of the commutative triangle
\begin{equation}\label{eq-triangle1}
    \xymatrix{
     K_0(C_0^\infty(T^*M)\rtimes\Ga)\ar[rr]^{p_!}\ar[dr]_{\ind} & &
     K_0(C^\infty(\Ga))\ar[dl]^\tau,\\
     & \mathbb{Z}&
    }
\end{equation}
where $p_!$ stands for the direct image mapping in $K$-theory, which is induced
by the projection $p:M\to pt$ to the one-point space, while
\begin{equation}\label{eq-sled1}
    \tau=\sum_{\langle g\rangle\subset\Ga}\ch_g,
\end{equation}
where we recall that $\ch_g:K_0(C^\infty(\Gamma))\to H^{ev}(pt)=\mathbb{C}$. In
addition, the number $\ch_g(p_![\sigma(D)])$ is equal to the contribution of
the conjugacy class $\langle g\rangle$ to the index formula~\eqref{Cohom1}.

2. The mapping $\tau$ is induced by the tracial state
$$
 \begin{array}{cccc}
   \tau:&C^*(\Ga)  & \lra & \mathbb{C}, \vspace{2mm}\\
    &f  & \longmapsto & \sum\limits_{g\in\Gamma} f(g), \\
 \end{array}
$$
where $C^*(\Ga)$ stands for the group $C^*$-algebra. Consider now the tracial
state
$$
\tau_e=\ch_e,
$$
i.e., $\tau_e(f)=f(e)$. By~\cite[Theorem 1]{Ji1} all tracial states induce the
same mapping on the $K$-group. The desired equality
$$
\ind D=\tau(p_![\sigma(D)])=\tau_e(p_![\sigma(D)])
$$
now follows, i.e., the index is equal to the contribution of the trivial
conjugacy class.

The proof of the theorem  is now complete.
\end{proof}

\section{Index of twisted operators}

In this section we show how $G$-invariant elliptic operators can be twisted by
certain projections to produce noncommutative elliptic operators.

\paragraph{Twisted operators.}

Suppose that we have a $G$-invariant elliptic operator
\begin{equation*}\label{eq-oper-G-inv}
    D:C^\infty(M,E)\lra C^\infty(M,F)
\end{equation*}
acting in spaces of sections of two $G$-bundles $E,F$. The $G$-invariance
condition means that for any $g\in G$ one has
$$
DT_E(g)=T_F(g)D,
$$
where $T_{E}(g)$, $T_F(g)$ denote actions of shift operators on sections of $E$
and $F$, respectively.

Given a projection
\begin{equation*}\label{eq-projection1}
    P\in\Mat_n(C^\infty(M)\rtimes\Ga),
\end{equation*}
let us define projection $\widetilde{P}_E:C^\infty(M,E\otimes \mathbb{C}^n)\lra
C^\infty(M,E\otimes \mathbb{C}^n)$ by the formula
\begin{equation}\label{eq11}
    P_E=\sum_{g\in\Gamma} T_E(g)\otimes P(g).
\end{equation}
One defines a similar projection  $P_F$, corresponding to $F$.

The direct sum of $n$ copies of $D$ is denoted by $D\otimes 1_n$.
\begin{definition}
The operator
\begin{equation}\label{eq-oper-coeff}
   {P}_F(D\otimes 1_n){P}_E:{P}_E(C^\infty(M,E\otimes
    \mathbb{C}^n))\lra {P}_F(C^\infty(M,F\otimes \mathbb{C}^n))
\end{equation}
is denoted by  $D\otimes 1_P$ and called \emph{operator $D$ twisted by
projection $P$.}
\end{definition}

\begin{proposition}
The operator~\eqref{eq-oper-coeff} is elliptic.
\end{proposition}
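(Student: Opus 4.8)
The plan is to exhibit an explicit symbolic parametrix for the twisted operator and then invoke the ellipticity criterion for operators in subspaces stated just after \eqref{eq-operator-in-sections}. First I would record the symbol of the building blocks. Since $D$ is $G$-invariant, the shift operators $T_E(g)$ intertwine $D\otimes 1_n$, so the symbol of $D\otimes 1_n$ is the $G$-equivariant collection supported at $g=e$, namely $\sigma(D)\otimes 1_n$ viewed inside $\Mat_n\bigl(C^\infty(S^*M,\Hom(E,F))\rtimes\Ga\bigr)$. The projections $P_E$ and $P_F$ defined in \eqref{eq11} are noncommutative operators of order $0$, and by the composition formula \eqref{eq-composition} their symbols are obtained from $P$ by the obvious identification: $\sigma(P_E)(g)$ is $P(g)$ acting on the $E$-factor via the codifferential $\partial g$, and similarly for $F$. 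The key structural fact I would verify is the compatibility identity
$$
\sigma(P_F)\,(\sigma(D)\otimes 1_n)=(\sigma(D)\otimes 1_n)\,\sigma(P_E),
$$
which holds because $\sigma(D)$ commutes with the codifferential action (that is exactly $G$-invariance of the principal symbol, read off from \eqref{beta}). This identity says the symbol of $D\otimes 1_P$ factors correctly, i.e.\ $\sigma(P_F(D\otimes 1_n)P_E)=\sigma(P_F)(\sigma(D)\otimes1_n)\sigma(P_E)$ is genuinely an element living between the two projections.

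Next I would construct the candidate for $r$. Let $\sigma(D)^{-1}$ denote the pointwise inverse of the principal symbol of the $G$-invariant elliptic operator $D$ (which exists on $S^*M$ since $D$ is elliptic), extended to the $e$-supported element of $C^\infty(S^*M,\Hom(F,E))\rtimes\Ga$. Set
$$
r=\sigma(P_E)\,(\sigma(D)^{-1}\otimes 1_n)\,\sigma(P_F)\in\Mat_n\bigl(C^\infty(S^*M)\rtimes\Ga\bigr).
$$
Then I would check $r\,\sigma(D\otimes 1_P)=\sigma(P_E)$ and $\sigma(D\otimes 1_P)\,r=\sigma(P_F)$. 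Each of these is a short manipulation: substitute the factored form of $\sigma(D\otimes 1_P)$, push $\sigma(D)^{-1}\otimes1_n$ past the intertwining relation to cancel $\sigma(D)\otimes1_n$, and use $\sigma(P_E)^2=\sigma(P_E)$, $\sigma(P_F)^2=\sigma(P_F)$ (which follow from $P_E,P_F$ being projections, hence their symbols being idempotent in the crossed product). This is exactly the relation required by the definition of ellipticity for \eqref{eq-operator-in-sections}, so the proposition follows.

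The main obstacle, and the step deserving the most care, is the bookkeeping in the symbol calculus for the crossed product: one must be sure that the symbol of the composite $P_F(D\otimes1_n)P_E$ really is the product $\sigma(P_F)(\sigma(D)\otimes1_n)\sigma(P_E)$ in $\Mat_n(C^\infty(S^*M)\rtimes\Ga)$, with the codifferential twists appearing in the right places, and that $\sigma(D)\otimes1_n$ commuting with these twists is a faithful rendering of $G$-invariance $DT_E(g)=T_F(g)D$ at the level of principal symbols. Once the intertwining identity $\sigma(P_F)(\sigma(D)\otimes1_n)=(\sigma(D)\otimes1_n)\sigma(P_E)$ is established cleanly, the rest is the routine idempotent cancellation above; no convergence issue arises because all symbols in sight lie in the smooth crossed product by construction (the $P(g)$ decay rapidly and $\sigma(D)$ is $e$-supported).
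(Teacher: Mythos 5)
Your proposal is correct and follows essentially the same route as the paper: the candidate parametrix is the identical element $\sigma(P_E)(\sigma(D)^{-1}\otimes 1_n)\sigma(P_F)$, the key input is the same intertwining relation between $\sigma(D\otimes 1_n)$ and the projections coming from $G$-invariance, and the verification is the same idempotent cancellation. The only difference is that you spell out the symbol bookkeeping for $P_E$, $P_F$ in more detail than the paper does.
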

\begin{proof}
Consider the symbol
\begin{equation}\label{eq-inverse}
\sigma({P}_E)(\sigma(D)^{-1}\otimes 1_n)\sigma({P}_F).
\end{equation}
Here the inverse $\sigma(D)^{-1}$ exists, since we assumed that  $D$ is
elliptic.

We claim that the symbol~\eqref{eq-inverse} is the inverse of the symbol of the
operator~\eqref{eq-oper-coeff}. Indeed, it follows from the $G$-invariance of
$D$ that $\sigma(D\otimes 1_n)$ intertwines projections $\sigma({P}_E)$ and
$\sigma({P}_F)$. Thus, we get
$$
\sigma({P}_F)(\sigma(D)\otimes 1_n)\sigma({P}_E)(\sigma(D)^{-1}\otimes
1_n)\sigma({P}_F)= \sigma({P}_F)(\sigma(D)\otimes 1_n)(\sigma(D)^{-1}\otimes
1_n)\sigma({P}_F)= \sigma({P}_F).
$$
Similar computation can be done for the composition in inverse order.
\end{proof}

\paragraph{Multiplicative property of Chern character.}

The symbol of the  $G$-invariant elliptic operator $D$ defines the class
$$
 [\sigma(D)]\in K^0_{G,c}(T^*M)
$$
in the equivariant $K$-group  (with compact supports) of the cotangent bundle,
while projection $P$ defines the class
$$
 [P]\in K_0(C^\infty(M)\rtimes\Ga)
$$
in the $K$-group of the crossed product.

The mapping $\sigma(D),P\mapsto \sigma(D\otimes 1_P)$, where $D\otimes 1_P$ is
the twisted operator, defines the product on $K$-groups
\begin{equation*}\label{eq-k-product}
    \begin{array}{ccc}
      K_0(C^\infty(M)\rtimes\Ga)\times K^0_{G,c}(T^*M) & \lra & K_0(C^\infty_0(T^*M)\rtimes\Ga), \vspace{2mm} \\
      \left[P\right],  [\sigma(D)] & \longmapsto & [\sigma(D\otimes 1_P)]. \\
    \end{array}
\end{equation*}

To formulate the multiplicative property of the Chern character with respect to
this product, we denote by   $G(g)\subset G$ the compact subgroup generated by
element $g$, and by $ev_g:R(G(g))\to\mathbb{C}$ the evaluation mapping for
characters of representations at point $g$. Let us define the homomorphism
$$
 \ch(g):K^0_G(X)\to H^{ev}(X^g)
$$
as the composition
\begin{equation}\label{localiz}
K^0_G(X)\to K^0_{G(g)}(X^g)\simeq K^0(X^g)\otimes R(G(g)) \stackrel{ev_g}\to
K^0(X^g)\stackrel{\ch}\to  H^{ev}(X^g),
\end{equation}
where the first mapping is the restriction to the fixed point set and the
isomorphism describes the equivariant $K$-group of a trivial $G(g)$-space in
terms of usual (nonequivariant) $K$-groups.

It was proved in \cite{NaSaSt17} that the diagram
\begin{equation}
\label{prodacc22}
 \xymatrix{
   K_0(C^\infty(M)\rtimes\Ga)\times K^0_{G,c}(T^*M) \ar[r] \ar[d]_{\ch_g\times \ch(g)} &
   K_0(C^\infty_0(T^*M)\rtimes\Ga)\ar[d]^{\ch_g}\\
   H^{ev}(M^g)\times H^{ev}_c(T^*M^g) \ar[r] & H^{ev}_c(T^*M^g)
 }
\end{equation}
is commutative for any $g\in\Ga$.

\paragraph{Index formula for twisted operators.}

We shall assume for simplicity that all submanifolds of fixed points   $M^g$
are orientable.\footnote{If the fixed-point manifolds are nonorientable, then
the formulas below can be modified  using local coefficient systems as in
\cite{AtSi3}. We shall omit this standard procedure here.}

Since the diagram~\eqref{prodacc22} is commutative, we have
$$
\ch_g[\sigma(D\otimes 1_P)]=\ch[\sigma(D)](g)\cdot \ch_g(P).
$$
Substituting this equality in the index formula~\eqref{Cohom1}, and integrating
over the fibers of the cosphere bundles, we obtain the   formula
\begin{equation}\label{Cohom1a}
\ind (D\otimes 1_P)=\sum_{\langle g \rangle\subset\Ga}\bigl\langle
\psi^{-1}_g\left(\ch[\sigma(D)](g)\right)\Td_g(T^*M\otimes\mathbb{C})\ch_g(P),
[M^{g}]\bigr\rangle
\end{equation}
for the index of twisted operators, where $\psi_g: H^*(M^g)\lra H^*_c(T^*M^g)$
stands for the Thom isomorphism defined by the orientation of $M^g$.

\section{Index of geometric operators}

Let us give index formulas for geometric operators  twisted by noncommutative
projections.

\paragraph{Euler operator.}

Let $\mathcal{E}$ be the Euler operator
\begin{equation*}\label{eq-euler}
    d+d^*:\Omega^{ev}(M)\lra \Omega^{odd}(M),
\end{equation*}
acting between spaces of forms of even and odd degrees.

Consider the twisted Euler operator\footnote{Hereinafter, lifts
(see~\eqref{eq11}) of projection $P$ to spaces of vector bundle sections will
be denoted for short as $P$.}
\begin{equation*}\label{eq-euler-twist}
    \mathcal{E}\otimes 1_P:  {P}\Omega^{ev}(M,\mathbb{C}^n)\lra
     {P}\Omega^{odd}(M,\mathbb{C}^n),
\end{equation*}
where $P\in\Mat_n(C^\infty(M)\rtimes\Ga)$ is a projection.

\begin{theorem}\label{th-euler-index} The index of twisted Euler operator is
equal to
$$
 \ind (\mathcal{E}\otimes 1_P)=\sum_{\langle g\rangle\subset\Ga}\chi(M^g)\ch_g^0(P),
$$
where  $\chi$ stands for the Euler characteristic, the number $\ch_g^0(P)$ is
equal to the zero degree component of the Chern character. In addition,
$\chi(M^g)$ and $\ch_g^0(P)$ are treated as locally-constant functions on the
fixed point set $M^g$.
\end{theorem}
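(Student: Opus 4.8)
The plan is to apply the general index formula~\eqref{Cohom1a} for twisted operators to the specific case $D=\mathcal{E}$, the Euler operator. The key input I need is the explicit value of the class $\psi^{-1}_g(\ch[\sigma(\mathcal{E})](g))\Td_g(T^*M\otimes\mathbb{C})$ appearing in~\eqref{Cohom1a}; once this is identified, the formula will reduce to pairing it against $\ch_g(P)$ over $[M^g]$. First I would recall the classical Atiyah--Singer computation for the equivariant index of the de Rham complex: the symbol of $d+d^*$ is, up to the Thom class, the alternating sum of exterior powers $\sum_j(-1)^j\Lambda^j(T^*M\otimes\mathbb{C})$, so that $\psi^{-1}_g(\ch[\sigma(\mathcal{E})](g))$ is (a Thom-pullback of) the class $\sum_j(-1)^j\ch\Lambda^j(T^*M\otimes\mathbb{C})(g)$ restricted to $M^g$, which factors as the product of the analogous alternating sum over the tangent directions along $M^g$ and over the normal bundle $N^g$.

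Next I would carry out the cancellation that is the heart of the Atiyah--Singer--Segal Lefschetz fixed-point theorem for the Euler characteristic: the factor coming from the normal bundle $N^g$ is precisely $\ch\Omega^{ev}(N^g\otimes\mathbb{C})(g)-\ch\Omega^{odd}(N^g\otimes\mathbb{C})(g)$, which is exactly the denominator appearing in the definition~\eqref{eq-todd-class} of $\Td_g(T^*M\otimes\mathbb{C})$. Multiplying by $\Td_g(T^*M\otimes\mathbb{C})=\Td(T^*M^g\otimes\mathbb{C})/(\text{that same denominator})$, the normal-bundle contributions cancel, leaving only the tangential factor times $\Td(T^*M^g\otimes\mathbb{C})$. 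Along $M^g$ the element $g$ acts trivially on the tangent directions, so the tangential alternating sum is just $\sum_j(-1)^j\ch\Lambda^j(T^*M^g\otimes\mathbb{C})=c_{\mathrm{top}}(TM^g)\cdot(\text{Todd-type correction})$; combined with $\Td(T^*M^g\otimes\mathbb{C})$ this collapses (exactly as in the untwisted Gauss--Bonnet--Chern theorem) to the Euler class $e(TM^g)$. Hence the whole geometric factor in~\eqref{Cohom1a} equals $e(TM^g)$ (pushed to $T^*M^g$ via $\psi_g$, or equivalently integrated over the cotangent fibers to give $e(TM^g)$ on $M^g$).

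With that identification, formula~\eqref{Cohom1a} becomes $\ind(\mathcal{E}\otimes 1_P)=\sum_{\langle g\rangle\subset\Ga}\langle e(TM^g)\,\ch_g(P),[M^g]\rangle$. Since $e(TM^g)$ lives in top degree on $M^g$, only the zero-degree component $\ch_g^0(P)$ of $\ch_g(P)$ survives the pairing, and $\langle e(TM^g),[M^g]\rangle=\chi(M^g)$ by the classical Gauss--Bonnet theorem; treating $\chi(M^g)$ and $\ch_g^0(P)$ as locally constant functions on the (possibly disconnected) manifold $M^g$ and summing components, I obtain $\ind(\mathcal{E}\otimes 1_P)=\sum_{\langle g\rangle\subset\Ga}\chi(M^g)\ch_g^0(P)$, as claimed. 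The main obstacle I anticipate is the bookkeeping in the normal-bundle cancellation: one must match conventions (sign/degree conventions for $\ch E(g)$, the use of $T^*M$ versus $TM$, and the normalization of $\ch^{odd}$ versus the Thom-isomorphism approach) carefully enough that the denominator in~\eqref{eq-todd-class} is literally the normal factor produced by $\ch[\sigma(\mathcal{E})](g)$, rather than its inverse or a twist thereof; everything else is a routine specialization of the general formula already established in the paper.
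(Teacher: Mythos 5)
Your proposal is correct and follows essentially the same route as the paper: both restrict the Euler symbol to $T^*M^g$, factor its Chern character into a tangential part times the normal factor $\ch\Omega^{ev}(N^g\otimes\mathbb{C})(g)-\ch\Omega^{odd}(N^g\otimes\mathbb{C})(g)$, cancel that factor against the denominator of $\Td_g(T^*M\otimes\mathbb{C})$, and invoke the classical identity $\psi^{-1}_g\bigl(\ch\sigma(\mathcal{E}_{M^g})\bigr)\Td(T^*M^g\otimes\mathbb{C})=e(TM^g)$ together with the top-degree argument to isolate $\chi(M^g)\ch_g^0(P)$. The bookkeeping concern you flag at the end is exactly the point the paper handles via the explicit symbol decomposition~\eqref{eq-euler-symbol}, so nothing is missing.
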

\begin{proof}
1. The restriction of the symbol $\sigma(\mathcal{E})$ to $T^*M^g\subset T^*M$
has the decomposition%
\footnote{This decomposition follows from the orthogonal decomposition of
tangent bundle
$$
 TM|_{M^g}\simeq TM^g\oplus N,
$$
and the corresponding graded decompositions $\Omega(TM)|_{M^g}\simeq
\Omega(TM^g)\otimes \Omega(N)$ of the exterior forms.}
\begin{equation}\label{eq-euler-symbol}
    \sigma(\mathcal{E})|_{T^*M^g}\simeq \Bigl(\sigma(\mathcal{E}_{M^g})\otimes 1_{\Omega^{ev}(N)}\Bigr)\oplus
    \Bigl(\sigma(\mathcal{E}^*_{M^g})\otimes 1_{\Omega^{odd}(N)}\Bigr),
\end{equation}
where $\mathcal{E}_{M^g}$ denotes the Euler operator on $M^g$ and
$\mathcal{E}^*_{M^g}$ is its adjoint.

2. It follows from~\eqref{eq-euler-symbol} that
$$
\ch\sigma(\mathcal{E})(g)=\ch\sigma(\mathcal{E}_{M^g})\cdot
\ch(\Omega^{ev}(N)-\Omega^{odd}(N))(g).
$$
Hence, we obtain
$$
 \ch\sigma(\mathcal{E})(g)\Td_g(T^*M\otimes
 \mathbb{C})=\ch\sigma(\mathcal{E}_{M^g})\Td (T^*M^g\otimes
 \mathbb{C}).
$$
Substituting this formula in Eq.~\eqref{Cohom1a}, we see that the contribution
of $M^g$ to the index is equal to
$$
\int_{M^g}\psi^{-1}_g\left(\ch\sigma(\mathcal{E}_{M^g})(g)\right) \Td
(T^*M^g\otimes  \mathbb{C})\ch_g (P)=\int_{M^g}
e(TM^g)\ch_g(P)=\chi(M^g)\ch_g^0(P),
$$
where $e(TM^g)$ stands for the Euler class. Here in the first equality we use
the classical result
$$
 \psi^{-1}_g\left(\ch\sigma(\mathcal{E}_{M^g})(g) \right)\Td (T^*M^g\otimes
 \mathbb{C})=e(TM^g),
$$
and in the second equality the fact that the Euler class is a top degree class.

The proof of the theorem is now complete.
\end{proof}

\paragraph{Signature operator.}

Suppose that $M$ is a $2l$-dimensional oriented manifold. We define the
involution on the space of differential forms
$$
\alpha=i^{k(k-1)+l}*:\Omega^k(M)\lra  \Omega^{2l-k}(M),\qquad \alpha^2=Id,
$$
where $*$ stands for the Hodge operator. The subspace of forms satisfying
equality   $\alpha\om=\om$ is called the space of \emph{self-dual forms}, and
satisfying equality $\alpha\om=-\om$ is called \emph{anti-self-dual forms}. The
corresponding subspaces in  $\Omega(M)$ are denoted by $\Omega^+(M)$ and
$\Omega^-(M)$.

Denote by $\mathcal{S}$ the signature operator
\begin{equation}\label{eq-signature}
    d+d^*:\Omega^{+}(M)\lra \Omega^{-}(M),
\end{equation}
acting from self-dual to anti-self-dual forms. Let us  assume that $G$ acts on
$M$ by orientation-preserving diffeomorphisms. In this case $\Omega^{\pm}(M)$
are $G$-invariant subspaces. Consider the twisted signature operator
\begin{equation}\label{eq-sign-twist}
    \mathcal{S}\otimes 1_P: {P}\Omega^+(M,\mathbb{C}^n)\lra
    {P}\Omega^-(M,\mathbb{C}^n),
\end{equation}
where $P\in\Mat_n(C^\infty(M)\rtimes\Ga)$ is a projection.

To give explicit index formula in this case, let us introduce
following~\cite{AtSi3} special cohomology class. Choose an element $g\in\Ga$.
Then $g$ acts on the normal bundle  $N^g$ of the embedding $M^g\subset M$ by
orthogonal transformations. Therefore, $N^g$ decomposes as the orthogonal sum
$$
N^g=N^g(-1)\bigoplus_{0<\theta<\pi} N^g(\theta),\qquad \text{where }
g|_{N^g(-1)}=-Id,\;\; g|_{N^g(\theta)}=e^{i\theta}Id,
$$
of eigensubbundles. The bundles $N^g(\theta)$ have complex structure. Let us
note that there is no eigenvalue $1$ in this decomposition. The bundle
$N^g(-1)$, as well as the manifold $M^g$ are even-dimensional. Denote
$$
t=\dim M^g/2,\quad r=\dim N^g(-1)/2,\quad s(\theta)=\dim N^g(\theta)/2.
$$

Let $L_g(M)$ be the cohomology class
\begin{equation*}\label{eq-class3}
2^{t-r}\prod_{0<\theta<\pi}
                     \left(\bigl(i{\rm tg}\frac\theta 2\bigr)^{-s(\theta)}\right)
                     L(TM^g)
                     L(N^g(-1))^{-1}
                     e(N^g(-1))
                     \prod_{0<\theta<\pi} \mathcal{M}^\theta(N^g(\theta))\in H^{ev}(M^g),
\end{equation*}
where $L$ is the  Hirzebruch $L$-class of a real vector bundle, defined in the
Borel-Hirzebruch formalism by the function
$$
\frac{\displaystyle   x/ 2}{{\rm th }\displaystyle\;  x /2},
$$
$e(N^g(-1))$ is the Euler class of an oriented vector bundle,\footnote{The
orientation of the bundle $N^g(-1)$ is determined by the orientation of $M^g$
(see~\cite{AtSi3}).} and $\mathcal{M}^\theta(N^g(\theta))$ is the
characteristic class of the complex vector bundle $N^g(\theta)$, which is
defined by the function
$$
\frac{{\rm th}\displaystyle\frac {i\theta}2}{{\rm th}\displaystyle\frac
{x+i\theta}2}.
$$

\begin{theorem}\label{th-signature-index} The index of twisted signature
operator is equal to
\begin{equation*}
 \ind (\mathcal{S}\otimes 1_P)=
  \sum_{\langle g\rangle\subset\Ga}\;\;
   \int_{M^g} L_g(M)
                     \ch_g(P).
\end{equation*}

\end{theorem}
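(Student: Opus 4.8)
The plan is to follow exactly the template established in the proof of Theorem~\ref{th-euler-index}, replacing the Euler operator $\mathcal{E}$ by the signature operator $\mathcal{S}$ and invoking the classical $G$-index theorem for the signature operator in place of the Gauss--Bonnet computation. First I would record the restriction of the principal symbol $\sigma(\mathcal{S})$ to $T^*M^g$. Using the orthogonal splitting $TM|_{M^g}\simeq TM^g\oplus N^g$, one gets a tensor-product decomposition of the exterior bundles of self-dual/anti-self-dual forms compatible with the involution $\alpha$; this expresses $\sigma(\mathcal{S})|_{T^*M^g}$ in terms of $\sigma(\mathcal{S}_{M^g})$, its adjoint, and the action of $g$ on the forms on $N^g$. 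Consequently $\ch\sigma(\mathcal{S})(g)$ factors as $\ch\sigma(\mathcal{S}_{M^g})$ times a characteristic class of $N^g\otimes\mathbb{C}$ depending on the eigenvalue decomposition $N^g=N^g(-1)\oplus\bigoplus_\theta N^g(\theta)$.

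Next I would combine this with the definition~\eqref{eq-todd-class} of $\Td_g$, so that the product $\ch\sigma(\mathcal{S})(g)\,\Td_g(T^*M\otimes\mathbb{C})$ collapses: the $N^g$-factor coming from the symbol cancels against the denominator of $\Td_g$, leaving $\ch\sigma(\mathcal{S}_{M^g})\cdot\Td(T^*M^g\otimes\mathbb{C})$ multiplied by the explicit correction factors $2^{t-r}$, $(i\,\mathrm{tg}\,\tfrac\theta2)^{-s(\theta)}$, $L(N^g(-1))^{-1}e(N^g(-1))$ and $\prod_\theta\mathcal{M}^\theta(N^g(\theta))$. Here I would cite Atiyah--Singer~\cite{AtSi3}, since this is precisely the bundle-theoretic identity underlying the cohomological form of the $G$-signature theorem. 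Then substituting into the twisted index formula~\eqref{Cohom1a}, applying the inverse Thom isomorphism $\psi_g^{-1}$ (which turns $\ch\sigma(\mathcal{S}_{M^g})$ paired against $\Td(T^*M^g\otimes\mathbb{C})$ into the $L$-class $L(TM^g)$ of $M^g$, again the classical signature computation), and pushing the remaining $\ch_g(P)$ along, the contribution of the conjugacy class $\langle g\rangle$ becomes exactly $\int_{M^g}L_g(M)\,\ch_g(P)$, which is the claimed summand.

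The one structural point requiring care is the interaction of the orientation data with the multiplicative property~\eqref{prodacc22}: the Thom isomorphism and the class $e(N^g(-1))$ both depend on orientations of $M^g$ and of $N^g(-1)$, and one must check that the orientation of $N^g(-1)$ induced by that of $M^g$ is consistently the one used in defining $L_g(M)$ — this is handled as in~\cite{AtSi3} and was already assumed (orientability of all $M^g$) in the paragraph preceding the Euler theorem. I expect the main obstacle to be bookkeeping rather than anything conceptual: correctly tracking the powers of $i$ and $2$ and the precise form of the characteristic classes $\mathcal{M}^\theta$ through the cancellation with $\Td_g$. Since every analytic ingredient — ellipticity of the twisted operator, the cohomological index formula~\eqref{Cohom1a}, and commutativity of~\eqref{prodacc22} — is already available, the proof reduces to this purely characteristic-class manipulation, which I would present compactly by quoting the classical $G$-signature identities and indicating the cancellation step, exactly parallel to the proof of Theorem~\ref{th-euler-index}.
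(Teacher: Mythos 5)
Your proposal is correct and follows essentially the same route as the paper: the paper's proof simply quotes the Atiyah--Singer identity $\psi^{-1}_g\left(\ch\sigma(\mathcal{S})(g)\right)\Td_g(T^*M\otimes\mathbb{C})=L_g(M)$ from~\cite{AtSi3} and substitutes it into Eq.~\eqref{Cohom1a}, which is exactly the cancellation-and-substitution you describe, merely compressed into a single cited identity. The extra detail you supply (the symbol decomposition over $T^*M^g$, the factorization of the Chern character, and the orientation bookkeeping for $N^g(-1)$) is the content of the $G$-signature theorem itself and is consistent with the paper.
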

\begin{proof}
Atiyah and Singer in~\cite{AtSi3} proved that
\begin{equation*}
\psi^{-1}_g\left(\ch\sigma(\mathcal{S})(g)\right)\Td_g(T^*M\otimes
\mathbb{C})=L_g(M).
\end{equation*}
Substituting this equality in Eq.~\eqref{Cohom1a}, we see that the contribution
of   $M^g$ to the index is equal to the desired expression.
\end{proof}

\paragraph{Dirac operator.}

Let $M$ be an even-dimensional oriented manifold, which is  endowed with a
$G$-invariant spin-structure (i.e., the action of $G$ on $M$ lifts to the
action on the spinor bundle $S(M)$). Let $\mathcal{D}$ be the Dirac
operator~\cite{AtSi3}
\begin{equation*}\label{eq-dirac}
    \mathcal{D}:S^+(M)\lra S^-(M),
\end{equation*}
acting between sections of half-spinor bundles.

Consider the twisted Dirac operator
\begin{equation*}\label{eq-dirac-twist}
    \mathcal{D}\otimes 1_P:  {P}S^+(M,\mathbb{C}^n)\lra
     {P}S^-(M,\mathbb{C}^n),
\end{equation*}
where $P\in\Mat_n(C^\infty(M)\rtimes\Ga)$ is some projection.

Let us consider for simplicity the case when either $\Ga$ is torsion free, or
its action is free. In this case, Eqs.~\eqref{eq-index-2} and \eqref{Cohom1a}
give the index formula.
\begin{theorem}\label{th-dirac-index} The index of twisted Dirac operator is
equal to
\begin{equation}\label{eq-dirac-index}
 \ind (\mathcal{D}\otimes 1_P)=\int_M A(TM)\ch_e(P),
\end{equation}
where $A(TM)$ is the $A$-class of the tangent bundle, which is defined in
Borel-Hirzebruch formalism by the function
$$
\frac{x/2}{{\rm sh}\;x/2}.
$$
\end{theorem}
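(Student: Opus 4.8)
The plan is to derive Theorem~\ref{th-dirac-index} as a direct specialization of the twisted index formula~\eqref{Cohom1a}, combined with the reduction to the trivial conjugacy class provided by Theorem~\ref{th-e}. Since we assume here that either $\Ga$ is torsion free or acts freely, Theorem~\ref{th-e} tells us that $\ind(\mathcal{D}\otimes 1_P)$ equals the contribution of the unit element $g=e$ alone; concretely, $M^e=M$, the fixed-point manifold is the whole manifold, and $\eqref{eq-index-2}$ together with the multiplicative property reduces the problem to evaluating
\begin{equation*}
\bigl\langle \psi_e^{-1}\bigl(\ch[\sigma(\mathcal{D})](e)\bigr)\Td(T^*M\otimes\mathbb{C})\,\ch_e(P),\,[M]\bigr\rangle.
\end{equation*}
So the only thing to check is the purely classical identity
\begin{equation*}
\psi_e^{-1}\bigl(\ch[\sigma(\mathcal{D})](e)\bigr)\Td(T^*M\otimes\mathbb{C})=A(TM),
\end{equation*}
where on the left $g=e$ so $\ch[\sigma(\mathcal{D})](e)$ is just the ordinary Chern character of the symbol class of the Dirac operator in $K^0_c(T^*M)$, and $\psi_e$ is the ordinary Thom isomorphism $H^*(M)\to H^*_c(T^*M)$.

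The key steps in order would be: first, invoke Theorem~\ref{th-e} to collapse the sum over conjugacy classes in~\eqref{Cohom1a} to the single term $\langle e\rangle$ — this is where the hypothesis ``$\Ga$ torsion free or acting freely'' is used, and it is already done for us. Second, specialize the general formula~\eqref{Cohom1a} at $g=e$: here $M^e=M$, the normal bundle is zero, the class $\psi_g$ is the honest Thom isomorphism, and $\Td_e(T^*M\otimes\mathbb{C})$ is literally the Todd class $\Td(T^*M\otimes\mathbb{C})$ since the denominator in~\eqref{eq-todd-class} degenerates (the normal bundle contributes nothing). Third, apply the classical Atiyah--Singer computation for the Dirac operator: the Thom-desuspended Chern character of the principal symbol of $\mathcal{D}$, multiplied by the Todd class of the complexified cotangent bundle, equals the $\widehat{A}$-genus class $A(TM)$ of the tangent bundle. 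Fourth, assemble: the pairing against $[M]$ of $A(TM)\ch_e(P)$ is exactly the asserted integral $\int_M A(TM)\ch_e(P)$.

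I do not expect a genuine obstacle here, because all the analytic content (Fredholmness, the cohomological index formula, the reduction to the trivial element, and the multiplicativity of the Chern character under twisting) has already been established as Theorems~\ref{th-finite}, \ref{atsi}, \ref{th-e} and the commutative diagram~\eqref{prodacc22}. The ``hard part,'' to the extent there is one, is simply the bookkeeping needed to verify that the twisted-operator formula~\eqref{Cohom1a} does specialize correctly at $g=e$ — in particular that $\ch[\sigma(D)](g)$ reduces to the ordinary $\ch[\sigma(D)]$ when $g=e$ (this follows from the definition~\eqref{localiz}, since $G(e)$ is trivial and $ev_e$ is the identity on $R(\{e\})=\mathbb{Z}$), and that the orientability hypothesis on fixed-point sets is vacuously satisfied for $M^e=M$ (assuming $M$ itself is oriented, which is part of the Dirac setup). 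Once those points are noted, the identity $\psi_e^{-1}(\ch[\sigma(\mathcal{D})])\,\Td(T^*M\otimes\mathbb{C})=A(TM)$ is quoted verbatim from~\cite{AtSi3} exactly as was done in the proof of Theorem~\ref{th-signature-index}, and the theorem follows.
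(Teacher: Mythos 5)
Your proposal is correct and follows exactly the route the paper intends: the paper gives no separate proof of Theorem~\ref{th-dirac-index} beyond the remark that Eqs.~\eqref{eq-index-2} and~\eqref{Cohom1a} combine to give the formula, which is precisely your reduction to the trivial conjugacy class via Theorem~\ref{th-e} followed by the classical Atiyah--Singer identity $\psi_e^{-1}(\ch\sigma(\mathcal{D}))\Td(T^*M\otimes\mathbb{C})=A(TM)$. Your additional bookkeeping (that $\ch[\sigma(D)](e)$ is the ordinary Chern character and that $\Td_e$ degenerates to the ordinary Todd class) is a faithful elaboration of what the paper leaves implicit.
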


\section{Index of operators on the circle}

\paragraph{Reflection group.}

Consider the operator
\begin{equation}\label{eq-oper-circle-1}
    D=\frac d{d\varphi}:C^\infty_+(\mathbb{S}^1)\lra
    C^\infty_-(\mathbb{S}^1),
\end{equation}
acting in the spaces
$$
C^\infty_{\pm}(\mathbb{S}^1)=\{u\in C^\infty(\mathbb{S}^1) \;|\;
u(-\varphi)=\pm u(\varphi)\}
$$
of even and odd functions on the circle. On the one hand, the index of this
operator is obviously equal to one.

On the other hand, the operator \eqref{eq-oper-circle-1} is equivalent to the
Euler operator
\begin{equation*}\label{eq-oper-circle-1a}
  \begin{array}{cccc}
    \mathcal{E}\otimes 1_P:& P\Omega^0(\mathbb{S}^1) & \lra  & P\Omega^1(\mathbb{S}^1), \vspace{2mm}\\
    &f & \longmapsto & df, \\
  \end{array}
\end{equation*}
twisted by the projection $P=(1+g^*)/2$, where $g(\varphi)=-\varphi$ denotes
the action of $\mathbb{Z}_2$ by reflections.

The mapping $g$ has 2 fixed points: $\varphi=0$ and $\varphi=\pi$. Thus, by
virtue of Theorem~\ref{th-euler-index} we get the index formula
\begin{multline*}
\ind D=\ind (\mathcal{E}\otimes 1_P)
=\chi(\mathbb{S}^1)\ch_e^0(P)+\chi(\{0\})\ch^0_g(P) +\chi(\{\pi\})\ch^0_g(P)
=\\
=
0+\frac 1 2+\frac 1 2=1,
\end{multline*}
where $\ch^0_g$ stands for the zero component of the Chern character.

This example shows that the contributions to the index formula of the conjugacy
classes of the group  can be fractional numbers.

\paragraph{Rotation group.}

Given a nonzero vector
$$
 \alpha=(\alpha_1,...,\alpha_n)\in \mathbb{T}^n,
$$
let us consider the equation
\begin{equation}\label{eq-circle-2}
\sum_{g\in \mathbb{Z}^n,l\le m}a_{g,l}(x)\left(-i\frac
d{dx}\right)^lu(x-g\alpha)=f(x)
\end{equation}
on the circle $\mathbb{S}^1$, where $u,f\in C^\infty(\mathbb{S}^1)$,  and
$g\alpha=g_1\alpha_1+\ldots+g_n\alpha_n.$ Here we assume that the coefficients
$a_{g,l}(x)$ are rapidly decaying as $|g|\to\infty$.

The vector $\alpha$ defines the action of the group $\mathbb{Z}^n$ on
$\mathbb{S}^1$:
$$
 g(\varphi)=\varphi+g\alpha.
$$
Denote the corresponding smooth crossed product by
$C^\infty(\mathbb{S}^1)\rtimes \mathbb{Z}^n$. Eq.~\eqref{eq-circle-2}
corresponds to the following noncommutative operator\footnote{The coefficients
in~\eqref{eq-circle-2} and~\eqref{eq-nc-operator2} are related by the equality:
$b_{g,l}(x)=a_{g,l}(x+g\alpha)$.}
\begin{equation}\label{eq-nc-operator2}
    D=\sum_{g\in \mathbb{Z}^n,l\le m}T(g)b_{g,l}(x)\left(-i\frac
d{dx}\right)^l:C^\infty(\mathbb{S}^1)\lra  C^\infty(\mathbb{S}^1).
\end{equation}
The symbol of $D$ is equal to
\begin{equation}\label{eq-nc-symbol1}
    \sigma(D)=\{b_{g,m}(x)\xi^m\}_{g\in \mathbb{Z}^n}\in C^\infty(S^*\mathbb{S}^1)\rtimes
    \mathbb{Z}^n, \quad (x,\xi)\in S^*\mathbb{S}^1.
\end{equation}
The decomposition $S^*\mathbb{S}^1=\mathbb{S}^1 \sqcup \mathbb{S}^1$ of the
cosphere bundle induces the decomposition
$$
C^\infty(S^*\mathbb{S}^1)\rtimes
    \mathbb{Z}^n    =C^\infty(\mathbb{S}^1)\rtimes \mathbb{Z}^n
    \oplus C^\infty(\mathbb{S}^1)\rtimes \mathbb{Z}^n.
$$
The corresponding components of the symbol are denoted by
$\sigma(D)=\sigma_+\oplus\sigma_-.$

If the symbol is invertible in  $C^\infty(S^*\mathbb{S}^1)\rtimes
\mathbb{Z}^n$, then the operator is elliptic. Since the group $\mathbb{Z}^n$ is
torsion free, there are no index contributions   of nontrivial conjugacy
classes and we get the  index formula
\begin{equation*}\label{eq-index-on-circle}
    \ind D=-\frac 1{2\pi i}\int_{\mathbb{S}^1}
      \Bigl[
        (\sigma_+^{-1}d\sigma_+)(0)-
        (\sigma_-^{-1}d\sigma_-)(0)
      \Bigr]
\end{equation*}
in terms of the symbol. Here we applied the index formula for matrix
operators~\eqref{eq-index-1a}.

The obtained index formula is similar to N\"other-Muskhelishvili formula, which
expresses the index of singular integral operators on the circle in terms of
the winding number of the symbol.

\begin{remark}
If the components of  $\alpha$ in \eqref{eq-circle-2} linearly dependent over
the field of rational numbers, then there are relations among the corresponding
shifts. In this case, it is natural to formulate the ellipticity condition in
terns of the crossed product
$C^\infty(S^*\mathbb{S}^1)\rtimes(\mathbb{Z}^n/L),$ where $L$ is a certain
lattice and there may be nontrivial contributions of fixed point sets to the
index formula.
\end{remark}

\section{Index of operators on the real line}

A.~Connes in his note~\cite{Con4} considered   elliptic operators on the
noncommutative torus. Later, such operators were studied in a number of papers
(e.g., see \cite{Baaj1,Baaj2,Con1,Ros6}). Let us show how the results presented
in the preceding sections enable one to compute index in this situation.

\paragraph{Noncommutative torus $A^\infty_{1/\theta}$.}

Given some number $\theta$ such that $0<\theta\le 1$, consider operators
\begin{equation}\label{eq-nctor-1}
    (Uf)(x)=f(x+1),\qquad (Vf)(x)=e^{-2\pi ix/\theta}f(x)
\end{equation}
of unit shift and multiplication by exponential, which act on functions on the
real line with coordinate $x$. These operators satisfy the commutation relation
\begin{equation}\label{nctorus-comm}
    VU=e^{2\pi i/\theta}UV.
\end{equation}
Let $A_{1/\theta}^\infty$ denote the algebra of operators
\begin{equation}\label{sum6}
\sum_{j,k=-\infty}^{\infty}a_{jk} U^jV^k,
\end{equation}
where coefficients $a_{jk}\in \mathbb{C}$ are rapidly decaying: for any $l$
there exists a constant $C$ such that
$$
|a_{jk}|\le C(1+|j|+|k|)^{-l},\qquad j,k=0,\pm1,\pm2,\dots
$$

\begin{example}
For $\theta=1$  the algebra $A_{1/\theta}^\infty$ is commutative and isomorphic
to the algebra of smooth functions on the torus $\mathbb{T}^2$. The isomorphism
is defined on generators as
$$
U\mapsto e^{i\varphi},\qquad V\mapsto e^{i\psi},
$$
where $\varphi$, $\psi$ denote coordinates on $\mathbb{T}^2$. In addition, the
rapid decay condition on $a_{jk}$ is transformed to the condition of
$C^\infty$-smoothness of the corresponding function on the torus.
\end{example}

For $\theta\ne 1$  the algebra $A_{1/\theta}^\infty$ is noncommutative and is
called the  \emph{algebra of functions on the noncommutative torus}.

\begin{remark}
The fact that $A^\infty_{1/\theta}$ is indeed an algebra, i.e., the product of
two series~\eqref{sum6} enjoys the rapid decay property, follows from the
isomorphism of $A^\infty_{1/\theta}$ and the smooth crossed product
$C^\infty(\mathbb{S}^1)\rtimes \mathbb{Z}$, where $\mathbb{Z}$ acts by
rotations by angle $2\pi/\theta$ (so-called irrational rotation algebra).
\end{remark}

\paragraph{Operators on the noncommutative torus.
Reduction to operators on $\mathbb{T}^2$.}

In the Schwartz space $S(\mathbb{R})$  of functions on the real line, A.~Connes
proposed to consider differential operators\footnote{The reader might naturally
have a question, why operators of the form~\eqref{eq-operators-connes} are
called operators  \emph{on} noncommutative torus? More generally, what are
operators on a ``noncommutative space''? We refer the reader to~\cite{Con1} for
detailed exposition of these questions. Here we note only, that
operators~\eqref{eq-operators-connes} have compact commutators with operators
$U',V'$, which generate the noncommutative torus $A^\infty_{ \theta}$:
\begin{equation*}
    (U'f)(x)=f(x+\theta),\qquad (V'f)(x)=e^{-2\pi ix}f(x).
\end{equation*}
In addition, if the operator has the Fredholm property, then it defines an
element in the Kasparov $K$-homology group of the algebra $A^\infty_{ \theta}$.
It is shown in the cited book that the mentioned duality between the algebras
 $A^\infty_{1/\theta}$ and  $A^\infty_{
\theta}$ is actually a special case of Poincar\'e duality in noncommutative
geometry.}
\begin{equation}\label{eq-operators-connes}
    D=\sum_{\alpha+\beta\le m} a_{\alpha\beta}x^\alpha\left(-i\frac
    d{dx}\right)^\beta:S(\mathbb{R})\lra S(\mathbb{R}),
\end{equation}
whose coefficients contain polynomials and elements $a_{\alpha\beta}\in
A^\infty_{1/\theta}$.

Let us show how operators~\eqref{eq-operators-connes} can be reduced to
operators on a compact manifold.

Consider the real line as the total space of the spiral covering
$$
\mathbb{R}\lra \mathbb{S}^1
$$
over the circle of length $\theta$ (see Fig.~\ref{fi2}).
\begin{figure}
  \begin{center}
   \includegraphics[height=6cm]{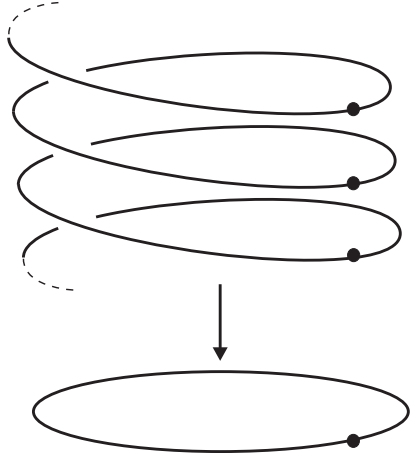}
   \caption{The line covers the circle}\label{fi2}
  \end{center}
\end{figure}

Then the Schwartz space on $\mathbb{R}$ is isomorphic to the space of smooth
sections of a (nontrivial!) bundle over  $\mathbb{S}^1$, whose fiber is the
Schwartz space $S(\mathbb{Z})$ of rapidly-decaying sequences (i.e., functions
on fibers  of the covering). If we now make Fourier transform $\mathcal{F}$
$$
S(\mathbb{Z})\stackrel{\mathcal{F}}\lra C^\infty(\mathbb{S}^1)
$$
fiberwise, then the obtained space is just the space of smooth sections of a
one-dimensional complex vector bundle over the torus   $\mathbb{T}^2$.
Collecting these transformations, we obtain the following result.

On $\mathbb{T}^2$ choose coordinates $0\le\varphi\le\theta$, $0\le\psi\le 1$.
\begin{lemma}
One has isomorphism
\begin{equation}\label{eq-isom1}
    S(\mathbb{R})\simeq C^\infty(\mathbb{T}^2,\gamma)
\end{equation}
of the Schwartz space on the real line and the space
\begin{equation*}\label{bottbundle}
C^\infty(\mathbb{T}^2,\gamma)=\{ g\in C^\infty(\mathbb{R}\times \mathbb{S}^1)
\; |\; g(\varphi+\theta,\psi)=g(\varphi,\psi)e^{-2\pi i\psi}\}
\end{equation*}
of smooth sections of the line bundle $\ga$ on $\mathbb{T}^2$. This isomorphism
is defined as
$$
f(x)\longmapsto \sum_{n\in\mathbb{Z}} f(\varphi+\theta n)e^{2\pi i n\psi}.
$$
\end{lemma}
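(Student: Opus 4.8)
The plan is to verify directly that the stated map
$$
\Phi: f \longmapsto g,\qquad g(\varphi,\psi)=\sum_{n\in\mathbb{Z}} f(\varphi+\theta n)e^{2\pi i n\psi},
$$
is a well-defined isomorphism from $S(\mathbb{R})$ onto $C^\infty(\mathbb{T}^2,\gamma)$, by constructing its inverse. First I would check that the series converges and defines a smooth function on $\mathbb{R}\times\mathbb{S}^1$: since $f\in S(\mathbb{R})$, the terms $f(\varphi+\theta n)$ decay faster than any power of $n$ uniformly for $\varphi$ in a compact set, and the same holds after any number of $\varphi$-derivatives, so termwise differentiation is justified and $g\in C^\infty(\mathbb{R}\times\mathbb{S}^1)$. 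Next I would check the quasi-periodicity defining $\gamma$: replacing $\varphi$ by $\varphi+\theta$ shifts the summation index, $g(\varphi+\theta,\psi)=\sum_n f(\varphi+\theta(n+1))e^{2\pi i n\psi}=\sum_m f(\varphi+\theta m)e^{2\pi i(m-1)\psi}=e^{-2\pi i\psi}g(\varphi,\psi)$, which is exactly the condition $g(\varphi+\theta,\psi)=g(\varphi,\psi)e^{-2\pi i\psi}$. This shows $\Phi$ maps into $C^\infty(\mathbb{T}^2,\gamma)$.

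For surjectivity and injectivity I would exhibit the inverse explicitly via the fiberwise Fourier coefficients. Given $g\in C^\infty(\mathbb{T}^2,\gamma)$, define
$$
f(x)=\int_0^1 g(x,\psi)\,d\psi
$$
(the $n=0$ Fourier coefficient in $\psi$, evaluated with first argument $x\in\mathbb{R}$, which makes sense since $g$ is defined on all of $\mathbb{R}\times\mathbb{S}^1$). A short computation using the quasi-periodicity shows that the $n$-th Fourier coefficient of $g(\varphi,\cdot)$ equals $f(\varphi+\theta n)$, so that $g$ is recovered as $\sum_n f(\varphi+\theta n)e^{2\pi i n\psi}$, i.e.\ $\Phi(f)=g$; conversely applying this recipe to $\Phi(f)$ returns $f$. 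It remains to see that $f$ so defined lies in $S(\mathbb{R})$: the smoothness and rapid decay of the Fourier coefficients of the smooth $\mathbb{S}^1$-family $\psi\mapsto g(\varphi,\psi)$ (with decay uniform in $\varphi$ on compacta, hence, via the quasi-periodicity transporting large $|x|$ to a fixed fundamental domain with a prefactor of modulus one, uniform over all of $\mathbb{R}$) translates precisely into the Schwartz condition on $f$.

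The only nontrivial point, and the one I would treat carefully, is this last equivalence: that the Schwartz decay of $f$ on the noncompact line $\mathbb{R}$ corresponds exactly to smoothness of $g$ on the \emph{compact} torus. This is the heart of why the spiral covering picture works. The argument is the classical duality between rapid decay and smoothness under Fourier transform, applied fiberwise: decay of $f$ in $x\Longleftrightarrow$ smoothness of the Fourier transform along the $S(\mathbb{Z})$-fibers of $\mathbb{R}\to\mathbb{S}^1$, combined with the fact that the line bundle $\gamma$ is precisely the bundle whose fiberwise Fourier transform untwists the $\mathbb{Z}$-action hidden in the spiral covering. Once the bookkeeping of which argument ranges over $\mathbb{R}$ versus $\mathbb{S}^1$ is fixed, both directions follow from estimating $\sup_x |x^a \partial^b f(x)|$ against finitely many $C^k$-seminorms of $g$ on $\mathbb{T}^2$ and vice versa.
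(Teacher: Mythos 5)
Your proof is correct and follows essentially the same route the paper takes: the paper proves the lemma by exhibiting the fiberwise Fourier transform along the spiral covering $\mathbb{R}\to\mathbb{S}^1$ and writing down the explicit inverse $g\mapsto \frac{1}{2\pi}\int_{\mathbb{S}^1}g(\theta\{x/\theta\},\psi)e^{-2\pi i[x/\theta]\psi}\,d\psi$, which agrees with your formula $f(x)=\int_0^1 g(x,\psi)\,d\psi$ after using the quasi-periodicity to move $x$ into the fundamental domain. Your verification of convergence, the twisting condition, and the equivalence of Schwartz decay with smoothness on the compact torus supplies exactly the details the paper leaves implicit.
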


The inverse mapping is
$$
g(\varphi,\psi)\longmapsto \frac 1
{2\pi}\int_{\mathbb{S}^1}g(\theta\{x/\theta\},\psi)e^{-2\pi
i[x/\theta]\psi}d\psi,
$$
where $[a]$ and $\{a\}$ are, respectively,  the integer and fractional part of
a real number $a$.

Applying isomorphism \eqref{eq-isom1}, we obtain the following correspondence
between   operators on the real line and on the torus
\begin{equation*}\label{tab-correspondence}
    \begin{tabular}{|c|c|}
      \hline
      operators on the real line & operators on the torus \\
      \hline
       $ -i\frac d{dx}$ & $-i\frac \partial{\partial\varphi}$\phantom{$\displaystyle\frac ZZ$}\\
      \hline $x$ &  $-i\frac\theta{2\pi}\frac \partial{\partial\psi}+\psi$ \phantom{$\displaystyle\frac ZZ$}\\
      \hline  $ e^{-2\pi ix/\theta} $ &  $  e^{-2\pi i\varphi/\theta} $\phantom{$\displaystyle\frac ZZ$} \\
      \hline $f(x)\to f(x+1)$& $g(\varphi,\psi)\mapsto g(\varphi+1,\psi)$\phantom{$\displaystyle\frac ZZ$}\\
      \hline
    \end{tabular}
\end{equation*}
It follows from this table that we obtain noncommutative operators on the
torus, which can be studied using Theorems~\ref{th-finite} and \ref{atsi}. This
was done in \cite{NaSaSt17} (including the computation of the index in terms of
the symbol of operator). In the present paper, we restrict ourselves by
considering one important example.

\paragraph{Index of twisted Dirac operators.}
Let $P\in A^\infty_{1/\theta}$ be a projection. Consider the noncommutative
operator
\begin{equation}\label{eq-oper-connes}
    D=P\left(x+\frac d{dx}\right)P: P(S(\mathbb{R}))\lra  P(S(\mathbb{R})).
\end{equation}
The corresponding operator on the torus is equal to
\begin{equation}\label{op-na-tore}
P\left(-i\frac\theta{2\pi}\frac \partial{\partial\psi}+\frac
\pa{\pa\varphi}+\psi\right)P: P(C^\infty(\mathbb{T}^2,\gamma))\lra
P(C^\infty(\mathbb{T}^2,\gamma))
\end{equation}
and is the adjoint of the Dirac operator
$$
-i\frac\theta{2\pi}\frac \partial{\partial\psi}-\frac \pa{\pa\varphi},
$$
twisted by projection $P$. Therefore, operator \eqref{eq-oper-connes} is
Fredholm for any projection $P$.

Let us note that the operator \eqref{op-na-tore} acts in sections of a
nontrivial bundle $\ga$. Thus, to compute the index, we shall apply index
formula \eqref{eq-dirac-index}.

To this end, we choose the following connection in $\gamma$
$$
\nabla_\ga=d+\frac{2\pi i}\theta\varphi d\psi:C^\infty(\mathbb{T}^2,\gamma)
\longrightarrow C^\infty(\mathbb{T}^2,\gamma\otimes \Omega^1(\mathbb{T}^2)).
$$
Then we define the corresponding noncommutative connection
$\nabla_P=P\nabla_\ga P$ and its curvature form
$$
F_P=(\nabla_P)^2=\left(P  d  P+\frac {2\pi i}\theta d\psi P\varphi P\right)^2.
$$
Straightforward computation shows that the curvature form is the operator of
multiplication by the  noncommutative   2-form
$$
F_P=PdPdP+\frac {2\pi i}\theta d\psi \bigl(-Pd\varphi +P[[\varphi,P],dP]\bigr).
$$
Since the coefficients of  $P$ do not depend on $\psi$, we have $dP=d\varphi
\frac {dP}{d\varphi}$ and thus
$$
F_P=\frac {2\pi i}\theta d\varphi d\psi \Bigl(P-P\left[[\varphi,P],\frac
{dP}{d\varphi}\right]\Bigr).
$$
The index formula \eqref{eq-dirac-index} in the special case of  $\mathbb{T}^2$
gives us
$$
\ind D=-\int_{\mathbb{T}^2}A(T^*\mathbb{T}^2)\ch_e(P).
$$
(There is minus sign, because we deal with the adjoint of the Dirac operator on
$\mathbb{T}^2$.) The cotangent bundle of the torus is trivial, thus
$A(T^*\mathbb{T}^2)=1$ and, therefore, the index is determined by the degree
two component of the Chern character:
$$
\ind D=\frac 1{2\pi i}\int_{\mathbb{T}^2} \tau_e(F_P).
$$
Substituting the curvature form in this equation, we obtain
\begin{multline}\label{ind-r}
\ind D=\frac 1{2\pi i}\int_{\mathbb{T}^2} \tau_e(F_P)=\frac 1{2\pi i}\int_{\mathbb{T}^2} F_P(0)=\\
=\frac 1{2\pi i}
   \int_{
          \begin{array}{c}
            0\le \varphi\le \theta, \\
            0\le \psi\le 1 \\
          \end{array}
         } F_P(0)
=\frac 1\theta \int_{0\le \varphi\le \theta}
  \Bigl(P-P\left[[\varphi,P],\frac {dP}{d\varphi}\right]\Bigr) (0)
  d\varphi.
\end{multline}
Here the second equality follows from the fact the the integral over
$\mathbb{T}^2$ already contains averaging over $\varphi$, therefore, additional
averaging, which is contained in $\tau_e$, is superfluous, and can be
  omitted.

Let now  $P$ be the Rieffel projection \cite{Rie1}:
$$
P=U^{-1}g+f+gU,
$$
where $f,g$ are smooth functions with period $\theta$, which are defined as
follows. Let us choose  $\varepsilon>0$ small enough, so that the intervals
$[0,\varepsilon]$ and $[1,1+\varepsilon]$ of the circle $\mathbb{R}/\theta
\mathbb{Z}$ are disjoint. Then we set
$$
f(x)=\left\{%
\begin{array}{cl}
  1, & \text{if } x\in [\varepsilon,1],\\
  0, & \text{if }x>1+\varepsilon, \\
  1-f(1+x),& \text{if } (0,\varepsilon),\\
\end{array}%
\right.
$$
$$
g(x)=
\left\{%
\begin{array}{cl}
  \sqrt{f(x)-f^2(x)}, & \text{if } x\in [0,\varepsilon],\\
  0, & \text{if }x \notin [0,\varepsilon].\\
\end{array}%
\right.
$$
Since $[\varphi,U]=-U$, $[\varphi,U^{-1}]=U^{-1}$, we  get
$$
[\varphi,P]=U^{-1}g-gU.
$$
for the Rieffel projection. Let us substitute this expression in
$P[[\varphi,P],P']$, multiply the forms and retain only terms containing only
zero power of $U$. We get
$$
-U^{-1}g^2Uf'+U^{-1}gf'gU+2fU^{-1}gg'U-2fg'g+g^2f'-gUf'U^{-1}g.
$$
Since we have to apply trace to this expression, we can multiply the first
three summands in the formula by   $U$ on the left and by $U^{-1}$ on the
right. The resulting element is equal to
$$
-g^2Uf'U^{-1}+gf'g+2UfU^{-1}gg'-2fg'g+g^2f'-gUf'U^{-1}g.
$$
Because $UfU^{-1}$ is equal to $1-f$ on the support of $g$, the last expression
is equal to
\begin{multline*}
f'g^2+f'g^2+2g'g(1-2f)+f'g^2+g^2f'=4g^2f'+(1-2f)(g^2)'=\\
f'(4f-4f^2+(1-2f)^2)=f'.
\end{multline*}
Let us substitute this formula in~\eqref{ind-r} and obtain finally
\begin{equation}\label{index-number}
\ind D=\frac 1\theta\left(\int_0^\theta f(\varphi)d\varphi-
\bigl.f(\varphi)\bigr|_0^\varepsilon\right)= \left\{\frac 1
\theta\right\}-\frac 1 \theta=-\left[\frac 1 \theta\right],
\end{equation}
where $\{\cdot\}$ and $[\cdot]$ stand for the fractional and integer part of a
real number.

It follows from \eqref{index-number} that for $\theta$ sufficiently small
operator \eqref{eq-oper-connes} has large cokernel.


\vfill

{\it Peoples' Friendship University of Russia, Moscow

Leibnitz Universit\"at Hannover}


\begin{thebibliography}{10}

\bibitem{Con4}
A.~Connes.
\newblock {$C\sp{\ast} $} alg\`ebres et g\'eom\'etrie diff\'erentielle.
\newblock {\em C. R. Acad. Sci. Paris S\'er. A-B}, {\bf 290}, No. 13, 1980,
  A599--A604.

\bibitem{Con1}
A.~Connes.
\newblock {\em Noncommutative geometry}.
\newblock Academic Press Inc., San Diego, CA, 1994.

\bibitem{CoLa2}
A.~Connes and G.~Landi.
\newblock Noncommutative manifolds, the instanton algebra and isospectral
  deformations.
\newblock {\em Comm. Math. Phys.}, {\bf 221}, No. 1, 2001,  141--159.

\bibitem{LaSu1}
G.~Landi and W.~van Suijlekom.
\newblock Principal fibrations from noncommutative spheres.
\newblock {\em Comm. Math. Phys.}, {\bf 260}, No. 1, 2005,  203--225.

\bibitem{CoDu1}
A.~Connes and M.~Dubois-Violette.
\newblock Noncommutative finite-dimensional manifolds. {I}. {S}pherical
  manifolds and related examples.
\newblock {\em Comm. Math. Phys.}, {\bf 230}, No. 3, 2002,  539--579.

\bibitem{AnLe3}
A.~B. Antonevich and A.~V. Lebedev.
\newblock Functional equations and functional operator equations. {A} {$C\sp
  \ast$}-algebraic approach.
\newblock In {\em Proceedings of the St. Petersburg Mathematical Society, Vol.
  VI}, volume 199 of {\em Amer. Math. Soc. Transl. Ser. 2}, 2000, pages
  25--116, Providence, RI. Amer. Math. Soc.

\bibitem{Ant1}
A.~B. Antonevich.
\newblock Strongly nonlocal boundary value problems for elliptic equations.
\newblock {\em Izv. Akad. Nauk SSSR Ser. Mat.}, {\bf 53}, No. 1, 1989,  3--24.

\bibitem{NaSaSt17}
V.~E. Nazaikinskii, A.~Yu. Savin, and B.~Yu. Sternin.
\newblock {\em Elliptic theory and noncommutative geometry}, volume 183 of {\em
  Operator Theory: Advances and Applications}.
\newblock Birkh\"auser Verlag, Basel, 2008.

\bibitem{NaSaSt18}
V.~E. Nazaikinskii, A.~Yu. Savin, and B.~Yu. Sternin.
\newblock On the index of nonlocal elliptic operators.
\newblock {\em Dokl. Akad. Nauk}, {\bf 420}, No. 5, 2008,  592--597.

\bibitem{Schwe1}
L.~B. Schweitzer.
\newblock Spectral invariance of dense subalgebras of operator algebras.
\newblock {\em Internat. J. Math.}, {\bf 4}, No. 2, 1993,  289--317.

\bibitem{KoNi1}
J.~Kohn and L.~Nirenberg.
\newblock An algebra of pseudo-differential operators.
\newblock {\em Comm. Pure Appl. Math.}, {\bf 18}, 1965,  269--305.

\bibitem{CoFl1}
P.~E. Conner and E.~E. Floyd.
\newblock {\em Differentiable periodic maps}.
\newblock Academic Press, New York, 1964.

\bibitem{Slo1}
J.~Slominska.
\newblock {On the equivariant {C}hern homomorphism.}
\newblock {\em Bull. Acad. Pol. Sci., Ser. Sci. Math. Astron. Phys.}, {\bf 24},
  1976,  909--913.

\bibitem{BaCo2}
P.~Baum and A.~Connes.
\newblock Chern character for discrete groups.
\newblock In {\em A f\^ete of topology}, 1988, pages 163--232. Academic Press,
  Boston, MA.

\bibitem{AtSi3}
M.~F. Atiyah and I.~M. Singer.
\newblock The index of elliptic operators {III}.
\newblock {\em Ann. Math.}, {\bf 87}, 1968,  546--604.

\bibitem{AtSe2}
M.~F. Atiyah and G.~B. Segal.
\newblock The index of elliptic operators {II}.
\newblock {\em Ann. Math.}, {\bf 87}, 1968,  531--545.

\bibitem{Ji1}
R.~Ji.
\newblock {Trace invariant and cyclic cohomology of twisted group
  $C^*$-algebras.}
\newblock {\em J. Funct. Anal.}, {\bf 130}, No. 2, 1995,  283--292.

\bibitem{Baaj1}
S.~Baaj.
\newblock Calcul pseudo-diff\'erentiel et produits crois\'es de
  {${C}^*$}-alg\`ebres. {I}.
\newblock {\em C. R. Acad. Sci. Paris S\'er. I Math.}, {\bf 307}, No. 11, 1988,
   581--586.

\bibitem{Baaj2}
S.~Baaj.
\newblock Calcul pseudo-diff\'erentiel et produits crois\'es de
  {${C}^*$}-alg\`ebres. {II}.
\newblock {\em C. R. Acad. Sci. Paris S\'er. I Math.}, {\bf 307}, No. 12, 1988,
   663--666.

\bibitem{Ros6}
J.~Rosenberg.
\newblock Noncommutative variations on {L}aplace's equation.
\newblock {\em Anal. PDE}, {\bf 1}, No. 1, 2008,  95--114.

\bibitem{Rie1}
M.~A. Rieffel.
\newblock {$C\sp{\ast} $}-algebras associated with irrational rotations.
\newblock {\em Pacific J. Math.}, {\bf 93}, No. 2, 1981,  415--429.

\end{thebibliography}
\end{document}